\newtheorem{theorem}{Theorem}[section]
\newtheorem{lemma}[theorem]{Lemma}
\newtheorem{corollary}[theorem]{Corollary}
\theoremstyle{definition}
\newtheorem{definition}[theorem]{Definition}
\newtheorem{conjecture}[theorem]{Conjecture}
\theoremstyle{remark}
\newtheorem{remark}[theorem]{Remark}
\numberwithin{equation}{section}
\newcommand{\N}{\mathbb{N}}
\newcommand{\R}{\mathbb{R}}
\newcommand {\oesgn}[1] {{\rm oesgn}(#1)}
\newcommand {\sgn}[1] {{\rm sgn}(#1)}
\title[Polynomials with bounded roots]{Distribution results on polynomials with bounded roots}
\dedicatory{Dedicated to Professor Johann Cigler on the occasion of his 80th birthday}
\author [P.~Kirschenhofer and J.~Thuswaldner]{Peter~Kirschenhofer and J\"org~Thuswaldner}
\address{Chair of Mathematics and Statistics,
Montanuniversit\"at Leoben, Franz Josef-Stra\ss{}e 18, A-8700 Leoben,
AUSTRIA} \email{Peter.Kirschenhofer@unileoben.ac.at,
Joerg.Thuswaldner@unileoben.ac.at}
\thanks{Both authors are supported by the Franco-Austrian research project I1136 ``Fractals and Numeration'' granted by the French National Research Agency (ANR) and the Austrian Science Fund (FWF), and by the Doctoral Program  W1230 ``Discrete Mathematics'' supported by the FWF. The second author is supported by the FWF project P27050.}
\date{\today}
\keywords{Polynomial with bounded roots, Selberg integral, Cauchy double alternant, shift radix system
}
\subjclass[2010]{Primary: 05A10, 11C08, 33B20.  Secondary: 05A05, 11C20}
\begin{document}

\begin{abstract}
For $d \in \mathbb{N}$ the well-known Schur-Cohn region
$\mathcal{E}_d$ consists of all $d$-dimensional vectors
$(a_1,\ldots,a_d)\in\R^d$ corresponding to monic polynomials
$X^d+a_1X^{d-1}+\cdots+a_{d-1}X+a_d$ whose roots all lie in the open
unit disk. This region has been extensively studied over decades.
Recently, Akiyama and Peth\H{o} considered the subsets
$\mathcal{E}_d^{(s)}$ of the Schur-Cohn region that correspond to
polynomials of degree $d$ with exactly $s$ pairs of nonreal roots.
They were especially interested in the $d$-dimensional Lebesgue
measures $v_d^{(s)}:=\lambda_d(\mathcal{E}_d^{(s)})$ of these sets
and their arithmetic properties, and gave some fundamental results.
Moreover, they posed two conjectures that we prove in the present
paper. Namely, we show that in the totally complex case $d=2s$ the
formula
\[
\frac{v_{2s}^{(s)}}{v_{2s}^{(0)}} = 2^{2s(s-1)}\binom {2s}s
\]
holds for all $s\in\N$ and in the general case the quotient
$v_d^{(s)}/v_d^{(0)}$ is an integer for all choices $d\in
\mathbb{N}$ and $s\le d/2$. We even go beyond that and prove
explicit formul\ae{} for 
$v_d^{(s)} / v_d^{(0)}$ for
arbitrary $d\in \mathbb{N}$, $s\le d/2$. The ingredients of our
proofs comprise Selberg type integrals, determinants like the Cauchy
double alternant, and partial Hilbert matrices.
\end{abstract}

\maketitle \setcounter{section}{0}

\section{Introduction}
Almost 100 years ago Issai Schur~\cite{Schur:17,Schur:18} introduced and studied what is now called the \emph{Schur-Cohn region}. For each dimension $d\in \mathbb{N}$ this region is defined as the set of all coefficient vectors of monic polynomials of degree $d$ each of whose roots lies in the open unit disk. As we will deal with polynomials with real coefficients in the present paper we give the formal definition for this setting. The $d$-dimensional Schur-Cohn region is defined by
\[
\mathcal{E}_d :=\{ (a_1,\ldots,a_d)\in \mathbb{R}^d \;:\;
\hbox{each root }\xi\hbox{ of } X^d+a_1X^{d-1}+\cdots+a_{d-1}X+a_d \hbox{ satifies }|\xi|<1
\}.
\]
Sometimes polynomials having all roots in the open unit disk are
called  \emph{contractive polynomials}. In the years after Schur's
papers were published, Cohn~\cite{Cohn:22} developed an algorithm to
check if a polynomial is contractive. In
Rahman and Schmeisser~\cite[Section~11.5]{RS:02} further developments and
properties of the Schur-Cohn region are surveyed.

Starting from the 1970s contractive polynomials were studied by Fam
and Meditch~\cite{Fam-Meditch:78} and their co-authors from the
viewpoint of discrete time systems design. In this context
Fam~\cite{Fam:89} calculated the $d$-dimensional Lebesgue measure
$\lambda_d(\mathcal{E}_d)$ of $\mathcal{E}_d$. In particular, he
proved that (here we give a simplified version of \cite[equations~(2.7) and
(2.8)]{Fam:89}, see also~\cite[equation~(9)]{Dub:16})
\begin{equation}\label{eq:full}
v_d := \lambda_d(\mathcal{E}_d) = 2^d \prod_{j=1}^{\lfloor d/2
\rfloor} \left(1+\frac{1}{2j}\right)^{2j-d}.
\end{equation}
More recently, the Schur-Cohn region occurred in connection
with number systems and so-called \emph{shift radix systems} (see
Akiyama {\it et
al.}~\cite{Akiyama-Borbeli-Brunotte-Pethoe-Thuswaldner:05} or
Kirschenhofer and Thuswaldner~\cite{KT:14}).

Given a polynomial $P\in\mathbb{R}[X]$ of degree $d$ with $r$ real
and $2s$ nonreal roots we call $(r,s)$ the \emph{signature} of $P$.
Since $d=r+2s$ for given degree $d$ the signature is determined by
the value $s$ of pairs of conjugate nonreal roots. In ~\cite{API},
Akiyama and Peth\H{o} studied a partition of $\mathcal{E}_d$ in
terms of the signature of contractive polynomials (see
also~\cite{APII}). In particular, they were interested in the sets
\[
\mathcal{E}_d^{(s)} := \{(a_1,\ldots,a_d)\in \mathcal{E}_d
\;:\;
X^d+a_1X^{d-1}+\cdots+a_{d-1}X+a_d \hbox{ has exactly } s \hbox{ pairs of nonreal roots}
\}
\]
and their volumes
\begin{equation}\label{eq:quotvol}
v_d^{(s)} := \lambda_d(\mathcal{E}_d^{(s)}).
\end{equation}
In the quest for formul\ae{} for these volumes they unveiled
interesting  relations of $v_d^{(s)}$ with Selberg integrals
(introduced in \cite{Selberg:44}) and their generalizations by
Aomoto~\cite{Aomoto:87}. In particular, they expressed $v_d^{(0)}$ in
terms of a classical Selberg integral which led to the simple
formula (see~\cite[Theorem~4.1 and Lemma~5.1]{API})
\[
v_d^{(0)} = 2^{d(d+1)/2}\prod_{j=1}^d\frac{(j-1)!^2}{(2j-1)!}.
\]
Besides that, the studies by Akiyama and Peth\H{o} in \cite{API}
were devoted to arithmetical properties of the quantities
$v_d^{(s)}$. It was proved that these numbers are rational for all
choices of $d$ and $s$. Moreover, certain quotients of these numbers
were studied. In particular, it was shown that $v_d / v_d^{(0)}$ is
an odd integer for all $d\ge 1$ and, based on numerical experiments,
the following conjecture was stated.

\begin{conjecture}[{see \cite[Conjecture~5.1]{API}}]\label{con:integer}
The quotient $v_d^{(s)}/v_d^{(0)}$ is an integer for all $s\le d/2$.
\end{conjecture}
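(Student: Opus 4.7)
The plan is to express $v_d^{(s)}$ as a multiple integral over the roots by changing variables from the coefficients, reduce it through Selberg-type integration and a Cauchy double-alternant evaluation to an explicit determinantal formula, and then extract integrality of the ratio from the arithmetic structure of a partial Hilbert matrix.

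First I would parametrise a contractive polynomial of signature $(d-2s,s)$ by its $d-2s$ real roots $x_1,\dots,x_{d-2s}\in(-1,1)$ together with its $s$ complex-conjugate pairs $\alpha_k\pm i\beta_k$ inside the upper half of the unit disc. The Jacobian of the map from roots to coefficients is the absolute Vandermonde $\prod_{i<j}|\xi_i-\xi_j|$; combining with $|dz\wedge d\bar z|=2\,d\alpha\,d\beta$ and dividing by the symmetry factor $(d-2s)!\,s!$ for reorderings yields
\[
v_d^{(s)}=\frac{2^s}{(d-2s)!\,s!}\int_{(-1,1)^{d-2s}}\!\int_{U^s}\prod_{i<j}|x_i-x_j|\,\prod_{i,k}\bigl((x_i-\alpha_k)^2+\beta_k^2\bigr)\,\prod_k 2\beta_k\,\prod_{k<l}R_{kl}\;d\mathbf{x}\,d\boldsymbol\alpha\,d\boldsymbol\beta,
\]
with $U=\{(\alpha,\beta):\alpha^2+\beta^2<1,\;\beta>0\}$ and $R_{kl}=\bigl((\alpha_k-\alpha_l)^2+(\beta_k-\beta_l)^2\bigr)\bigl((\alpha_k-\alpha_l)^2+(\beta_k+\beta_l)^2\bigr)$. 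Setting $s=0$ specialises this to the Selberg integral giving Akiyama and Peth\H{o}'s formula for $v_d^{(0)}$.

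Second, I would integrate out the $s$ imaginary parts $\beta_k$ one at a time. The integrand is polynomial in $\beta_k^2$ on the interval $0\le\beta_k\le\sqrt{1-\alpha_k^2}$, so each such integration is elementary, and after expansion $v_d^{(s)}$ becomes a finite linear combination, indexed by multi-exponents $(m_i,n_i)$, of Selberg-type integrals on $(-1,1)^{d-s}$ of the shape
\[
\int_{(-1,1)^{d-s}}\prod_{i<j}|y_i-y_j|\,\prod_i(1-y_i^2)^{m_i}y_i^{n_i}\,d\mathbf{y}.
\]
Each such integral is a Jacobi-weighted Selberg integral (or a mild Aomoto-type generalisation thereof), hence evaluable in closed form in gamma-function factors. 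The resulting sum should reorganise, via the Cauchy double-alternant identity, into an equality of the form
\[
\frac{v_d^{(s)}}{v_d^{(0)}}=C_{d,s}\,\det M,
\]
where $C_{d,s}$ is an explicit factor of factorials and $M$ is a submatrix of a Hilbert-type matrix $\bigl(\tfrac{1}{i+j-c}\bigr)$, i.e.\ a \emph{partial Hilbert matrix}.

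The principal obstacle I expect is the integrality assertion itself, which after this reduction becomes the statement that the factorial denominators inside $C_{d,s}$ and the linear denominators $(i+j-c)$ arising from $M$ cancel completely against the numerators produced by the $\beta_k$-expansions. To overcome it I would row- and column-reduce $M$ in the spirit of the classical argument that the inverse of the full Hilbert matrix is integral, factoring out $(i+j-c)$'s and shared factorials until $\det M$ is manifestly an integer multiple of what is needed to kill the prefactor. This would simultaneously furnish the explicit formula for $v_d^{(s)}/v_d^{(0)}$ and establish Conjecture~\ref{con:integer}. In the boundary case $d=2s$ no real roots remain, the matrix $M$ degenerates, and the expression should collapse to the single closed form $\frac{v_{2s}^{(s)}}{v_{2s}^{(0)}}=2^{2s(s-1)}\binom{2s}{s}$ announced in the abstract.
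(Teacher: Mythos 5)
Your starting point (root parametrization, Vandermonde Jacobian, symmetry factor) is the same as the paper's, which simply quotes Akiyama--Peth\H{o}'s integral representation. But your second step has a genuine gap: after integrating out the $\beta_k$'s you claim the integral becomes a finite combination of Jacobi-weighted Selberg (or Aomoto-type) integrals of the form $\int\prod_{i<j}|y_i-y_j|\prod_i(1-y_i^2)^{m_i}y_i^{n_i}\,d\mathbf{y}$. That is not what you get: the three kinds of couplings in your integrand (real--real of degree $1$ in $|x_i-x_j|$, real--complex of degree $2$, complex--complex of degree $4$) do not collapse to a uniform Selberg coupling $\prod_{i<j}|y_i-y_j|^{2\gamma}$ times a product of one-variable weights, and Aomoto's extension does not cover such mixed products. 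The paper avoids this entirely by never integrating the $\beta_k$'s against a polynomial expansion; it expands the full Vandermonde determinant (Laplace expansion along the $2s$ ``complex'' columns, then the permutation expansion with polar coordinates for each conjugate pair, following Alastuey--Jancovici), which decouples the variables exactly and produces, for each admissible row set, a product of a minor of the matrix $\bigl(\tfrac{1}{(2j)^2-(2k-1)^2}\bigr)$ with an ordered-simplex integral of a partial Vandermonde; the latter is evaluated by a separate convolution identity (the $S_m$-sums and Lemma~\ref{lem:newconv}) plus the Cauchy double alternant and its ``odd'' variant.

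The second gap is in the integrality mechanism. The true formula (Theorem~\ref{mixedratio}) is not a single expression $C_{d,s}\det M$ with $M$ a partial Hilbert matrix; it is an alternating sum over all subsets $K\subseteq\{1,\dots,\lfloor d/2\rfloor\}$ of terms $\binom{\lfloor d/2\rfloor-|K|}{s-|K|}(-1)^{s+|K|}$ times a $K$-indexed determinant, and the subsets arise from a partial-fraction splitting of the entries, not from the geometry you describe. Moreover, integrality is not obtained by anything like the integrality of the inverse of the full Hilbert matrix: what has to be shown is that the specific prefactor $\prod_{k\in K}\binom{d+2k}{4k}\binom{4k-1}{2k-1}2k$ times $\det\bigl(\tfrac1{j+k}\bigr)_{j,k\in K}$ is an integer, and the paper proves this by absorbing the prefactor row-by-row and column-by-column into the determinant so that every entry becomes the product of two binomial coefficients, $\binom{d+2j}{2j+2k}\binom{2j+2k-1}{2j-1}$, whence each determinant, and then the whole signed binomial sum, is manifestly an integer. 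Your proposal leaves exactly this cancellation --- the actual content of Conjecture~\ref{con:integer} --- as a hope (``until $\det M$ is manifestly an integer multiple of what is needed''), so as it stands the argument does not establish the conjecture.
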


Moreover, for the particular instance $d=2s$
the following surprisingly simple explicit formula was
conjectured.

\begin{conjecture}[{see \cite[Conjecture~5.2]{API}}]\label{con:complex}
For all $s\in \N$ we have
\begin{equation}\label{eq:cxc}
\frac{v_{2s}^{(s)}}{v_{2s}^{(0)}} = 2^{2s(s-1)}\binom {2s}s.
\end{equation}
\end{conjecture}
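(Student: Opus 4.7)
The plan is to set up $v_{2s}^{(s)}$ as an explicit multiple integral over root parameters, reduce it to a Pfaffian of Cauchy/Hilbert type, and then compare with the Selberg evaluation of $v_{2s}^{(0)}$ recorded in the excerpt.

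First, I would parametrise each totally complex contractive polynomial of degree $2s$ by its $s$ conjugate root pairs $\xi_j = x_j + \I y_j$, $\bar\xi_j$, with $\xi_j$ in the open upper half unit disk $D_+ = \{(x,y)\in\R^2 : x^2+y^2<1,\ y>0\}$. Writing the polynomial as $\prod_{j=1}^s (X^2 - 2 x_j X + x_j^2 + y_j^2)$ and factoring the change of variables $(x_j,y_j)\mapsto(\xi_j,\bar\xi_j)\mapsto(a_1,\ldots,a_{2s})$, the first step contributes a Jacobian of $2^s$ and the second the Vandermonde in the $2s$ conjugate-paired roots; dividing by $s!$ for the ordering of the pairs yields
\[
v_{2s}^{(s)} = \frac{2^{2s}}{s!}\int_{D_+^s}\Bigl(\prod_j y_j\Bigr)\prod_{j<m}\bigl[(x_j-x_m)^2+(y_j-y_m)^2\bigr]\bigl[(x_j-x_m)^2+(y_j+y_m)^2\bigr]\,dx\,dy,
\]
or equivalently $v_{2s}^{(s)}=\frac{2^s}{s!}\int_{D_+^s}\lvert\Delta(\xi_1,\bar\xi_1,\ldots,\xi_s,\bar\xi_s)\rvert\,dx\,dy$. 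For $s=1$ this gives $v_2^{(1)}=8/3$, matching $v_2-v_2^{(0)}$ with $v_2=4$ from \eqref{eq:full} and the Selberg value $v_2^{(0)}=4/3$.

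Second, after symmetrising each $y_j$ over its sign to extend integration from $D_+^s$ to the full disk $\tilde D^s$, I would apply a de~Bruijn / Andreief-type identity to collapse the $2s$-dimensional integral of $\lvert\Delta\rvert$ into a Pfaffian of size $2s\times 2s$. The entries of that Pfaffian are planar disk moments $\int_{\tilde D}\xi^a\bar\xi^b\,d^2\xi$ with the conjugation-twisted pairing prescribed by de~Bruijn's identity; in polar coordinates these moments reduce to simple rational functions of $a$ and $b$. A careful choice of polynomial basis (a skew-orthogonal analogue on the disk adapted to the conjugation structure) should cast the Pfaffian matrix in the ``partial Hilbert matrix'' shape advertised in the abstract, whose entries are of Hilbert type $1/(i+j-1)$.

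Third, I would evaluate that Pfaffian by the Cauchy double alternant, obtaining a product of simple factorial ratios, and divide by the Selberg value $v_{2s}^{(0)}=2^{s(2s+1)}\prod_{j=1}^{2s}(j-1)!^2/(2j-1)!$ recorded in the excerpt. After simplification the ratio $v_{2s}^{(s)}/v_{2s}^{(0)}$ should telescope to $2^{2s(s-1)}\binom{2s}{s}$, in which the binomial $(2s)!/(s!)^2$ arises from the mismatch between the $s!$ ordering factor in the complex parametrisation and the $(2s)!$ in the real-root one, while the power $2^{2s(s-1)}$ accumulates from the Jacobian prefactor and the explicit evaluation of the partial Hilbert determinant.

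The main obstacle I anticipate is step two: identifying a polynomial basis on the disk in which the Pfaffian matrix acquires its clean partial-Hilbert form, and then evaluating that Pfaffian in closed product form. The reduction of $\int\lvert\Delta\rvert$ to a Pfaffian is standard via de~Bruijn, and the Cauchy double alternant evaluates any such matrix cleanly once it is in the right shape, but the combination required to match the ratio exactly against $2^{2s(s-1)}\binom{2s}{s}\cdot v_{2s}^{(0)}$ is a delicate determinantal identity, and I expect this to be the technical core of the proof.
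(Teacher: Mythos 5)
Your road map is essentially the paper's own: parametrise by the $s$ conjugate root pairs (your first display and the $s=1$ check agree with the paper's starting integral), reduce the integral of the interleaved Vandermonde $V(z_1,\bar z_1,\dots,z_s,\bar z_s)$ to a finite determinantal quantity, evaluate that by the Cauchy double alternant, and divide by the Selberg value of $v_{2s}^{(0)}$. The de Bruijn/Pfaffian packaging is only a cosmetic difference from what the paper does by hand, namely expanding the Vandermonde over $\mathfrak{S}_{2s}$ and integrating term by term in polar coordinates. But as written the proposal stops exactly at the step you yourself flag as the ``main obstacle'', and that step is where the content of the proof lies, so there is a genuine gap: neither the reduction to an explicitly evaluable matrix nor the closed-form evaluation is actually carried out, and your heuristic explanation of where $\binom{2s}{s}$ comes from (an $s!$ versus $(2s)!$ ``ordering mismatch'') is not a derivation.

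The good news is that the obstacle you anticipate is illusory: no skew-orthogonal or otherwise adapted basis on the disk is needed. Working with the plain monomial basis over $D_+^s$ (do not symmetrise in $y_j$ -- $\lvert\Delta\rvert$ is then no longer a determinant; on $D_+^s$ the interleaved Vandermonde already has constant sign), the Pfaffian entries are the half-disk moments $\int_{D_+}\bigl(z^{a-1}\bar z^{b-1}-z^{b-1}\bar z^{a-1}\bigr)\,dx\,dy$, and the angular integral over $(0,\pi)$ kills every entry with $a-b$ even and gives a constant multiple of $1/\bigl((a-b)(a+b)\bigr)$ when $a-b$ is odd. Hence the $2s\times 2s$ antisymmetric matrix is checkerboard; after separating odd and even indices (tracking the resulting sign) its Pfaffian is a determinant of an $s\times s$ matrix of Cauchy type, and one arrives at $v_{2s}^{(s)}=2^{3s}D_s$ with $D_s=\det\bigl(1/((2j)^2-(2k-1)^2)\bigr)_{1\le j,k\le s}$, which is precisely the paper's Lemma on the totally complex case. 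The remaining work, which your plan only asserts ``should telescope'', is the Cauchy double alternant evaluation of $D_s$, e.g.\ via the ratio $D_{s+1}/D_s=2^{8s}\big/\bigl((s+1)^2\binom{4s+3}{2s+1}\binom{4s+1}{2s}\bigr)$ and induction, followed by division by $v_{2s}^{(0)}=2^{s(2s+1)}\prod_{j=1}^{2s}(j-1)!^2/(2j-1)!$; this is routine but must be done to obtain $2^{2s(s-1)}\binom{2s}{s}$.
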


In the present paper we confirm both these conjectures (see
Theorem~\ref{Integralratios} and Theorem~\ref{totalcomplex} below).
We note that a special instance of the first conjecture was
confirmed recently by Kirschenhofer and Weitzer~\cite{KiWei}, where
the formula
\begin{equation}\label{eq:KWLegendre}
\frac{v_d^{(1)}}{v_d^{(0)}} = \frac{P_d(3)-2d-1}{4}
\end{equation}
was established. Here $P_d$ is the $d$-th Legendre polynomial.
Integrality of this quotient was then derived by standard properties
of Legendre polynomials.

In our proof of the conjectures of Akiyama and Peth\H{o} various
ingredients are needed. We start with a theorem established by these
authors ({\it viz.}~\cite[Theorem~2.1]{API}) to write $v_d^{(s)}$ as
a Selberg-type integral. It should be noted that many questions and problems in number theory, combinatorics, CFT, and other mathematical areas are related to integrals of this type (compare {\it e.g.}~\cite{AAR:99, FW:08}).
For the treatment of the Selberg type integrals occurring in our proof in the context of random Vandermonde matrices an approach proposed in Alastuey and Jancovici~\cite{AlastueyJankovici:xx} turns out to be very valuable. Besides that, methods from ``advanced determinant calculus'' (see~\cite{Krattenthaler2001}) are used to obtain closed forms for variants of the Cauchy double alternant that come up along the way. Moreover,
we need minors of the Hilbert matrix, which is known in numerics for
its numerical instability (see {\it e.g.}~\cite{Todd:60}), to further
simplify our formul\ae{} in order to finally establish the
integrality of the quotients under scrutiny.

The paper is organized as follows. In Section~\ref{sec:cx} we
confirm~Conjecture~\ref{con:complex} of Akiyama and Peth\H{o}. In
particular, we prove \eqref{eq:cxc} (see Theorem~\ref{totalcomplex})
which immediately implies that this quotient is integral. Although
the proof of this result is much shorter than the proof of the
general case it contains several of the main ideas.
Section~\ref{sec:prep} is devoted to preparatory definitions and
results. It contains the proof of a closed form for a certain convolution of a ``determinant
like'' sum that will be needed in the proof of our main result. Section~\ref{sec:vol} contains a
formula for the volumes $v_d^{(s)}$ (see Theorem~\ref{Detmix}). In
Section~\ref{sec:quo} we prove different formul\ae{} for the quotients
$v_d^{(s)}/v_d^{(0)}$ and establish their integrality (see Theorem~\ref{mixedratio}). This proves Conjecture~\ref{con:integer} of Akiyama and Peth\H{o}. We then discuss how
the known special cases proved by Akiyama and Peth\H{o}~\cite{API}
as well as Kirschenhofer and Weitzer~\cite{KiWei} follow. 

\section{The totally complex case}\label{sec:cx}

To prove the integrality of $v_d^{(s)}/v_d^{(0)}$ turns out to be
rather involved. For this reason  we decided to start this paper
with the proof of Conjecture~\ref{con:complex} concerning the
quotient ${v_{2s}^{(s)}}/{v_{2s}^{(0)}}$. The proof of this
conjecture is easier than the proof of Conjecture~\ref{con:integer}
but already contains some of the ideas needed for the general case
and serves as a ``road map'' for the treatment of the
contribution of the nonreal roots in the general proof. Moreover, we
think that the nice formula for this quotient deserves to be
announced in a separate theorem.

\begin{theorem} \label{totalcomplex} The quotient ${v_{2s}^{(s)}}/{v_{2s}^{(0)}}$  is an
integer for each $s\geq1$. In particular,
\begin{equation*}
\frac{v_{2s}^{(s)}}{v_{2s}^{(0)}} = 2^{2s(s-1)}\binom {2s}s.
\end{equation*}
\end{theorem}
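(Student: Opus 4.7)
The plan is to follow the ``road map'' flagged in the introduction: combine Akiyama and Peth\H{o}'s integral representation of $v_{2s}^{(s)}$ (see \cite[Theorem~2.1]{API}) with a Pfaffian reduction in the spirit of Alastuey and Jancovici, and conclude by evaluating a Cauchy double alternant. The Jacobian of the passage from the coefficients of a contractive polynomial of signature $(0,s)$ to the ordered tuple of its $2s$ complex conjugate roots equals $2^{s}$ times the Vandermonde product, so that representation reads
\[
v_{2s}^{(s)} \;=\; \frac{2^{s}}{s!}\int_{D_{+}^{s}}\prod_{k=1}^{s}(2y_{k})\prod_{1\le i<j\le s}|z_{i}-z_{j}|^{2}|z_{i}-\bar z_{j}|^{2}\,dA(z_{1})\cdots dA(z_{s}),
\]
where $D_{+}$ denotes the upper half of the open unit disk and $z_{k}=x_{k}+iy_{k}$. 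The integrand is precisely the modulus $|V(z_{1},\bar z_{1},\ldots,z_{s},\bar z_{s})|$ of the Vandermonde $V=\det(\zeta_{i}^{j-1})_{i,j}$ in the $2s$ root variables $\zeta_{2k-1}=z_{k},\ \zeta_{2k}=\bar z_{k}$. A direct calculation gives $V=(-2i)^{s}\prod_{k}y_{k}\cdot\prod_{i<j}|z_{i}-z_{j}|^{2}|z_{i}-\bar z_{j}|^{2}$, so $V$ has constant argument on $D_{+}^{s}$ and $|V|=i^{s}V$ there. Hence $v_{2s}^{(s)}=\tfrac{(2i)^{s}}{s!}\int_{D_{+}^{s}} V\,dA$, with a \emph{polynomial} integrand in the $z_{k}$ and $\bar z_{k}$ that can be integrated term by term.

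Expanding $V$ as a signed sum over $S_{2s}$ and integrating reduces everything to the one-dimensional moments $I_{a,b}:=\int_{D_{+}}z^{a}\bar z^{b}\,dA(z)$. These are elementary in polar coordinates: they vanish whenever $a-b$ is a non-zero even integer, and for $a\neq b$ with $a-b$ odd they equal $2i/((a-b)(a+b+2))$. Only off-diagonal entries appear in the permutation sum, and there $I_{a,b}$ is antisymmetric and purely imaginary, so the sum matches the standard Pfaffian formula and collapses to
\[
v_{2s}^{(s)} \;=\; (-4)^{s}\operatorname{Pf}(B),
\]
where $B$ is the real antisymmetric $2s\times 2s$ matrix with $B_{i,j}=-2/((j-i)(i+j))$ when $i+j$ is odd, and $B_{i,j}=0$ otherwise. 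This is the Alastuey--Jancovici step of the argument.

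Since $B$ vanishes on pairs of indices of equal parity, the shuffle permutation listing the odd indices before the even ones puts $B$ into the block form $\left(\begin{smallmatrix}0 & C\\ -C^{T} & 0\end{smallmatrix}\right)$, so $\operatorname{Pf}(B)=\pm\det C$ with an explicit sign. Direct inspection identifies $C$ as the scaled Cauchy matrix
\[
C_{a,b}\;=\;\frac{2}{(2a-1)^{2}-(2b)^{2}},\qquad 1\le a,b\le s.
\]
The classical Cauchy double alternant, applied with $x_{a}=(2a-1)^{2}$ and $y_{b}=(2b)^{2}$, then produces a closed product formula for $\det C$ featuring $\prod_{a<b}(b-a)(a+b-1)(a+b)$ in the numerator and $\prod_{a,b}(2a-2b-1)(2a+2b-1)$ in the denominator. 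What remains, and is the main obstacle, is a purely combinatorial simplification: after dividing by the Selberg value $v_{2s}^{(0)}=2^{s(2s+1)}\prod_{j=1}^{2s}(j-1)!^{2}/(2j-1)!$ recalled in the introduction, one must rewrite the products above in terms of factorials and of double factorials of consecutive odd integers and extract the massive cancellation that collapses the ratio to $2^{2s(s-1)}\binom{2s}{s}$. Once accomplished, integrality of the quotient is an immediate consequence of the explicit formula.
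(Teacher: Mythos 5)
Your route is essentially the paper's own argument in Pfaffian clothing, and the steps you actually carry out are correct. Starting from \cite[Theorem~2.1]{API} with the Jacobian $4y_j$, writing the integrand as $|V|=i^sV$, expanding the Vandermonde and computing the half-disk moments $I_{a,b}$ is exactly what is done in the proof of Lemma~\ref{totalcomplexlemma}; your observation that only moments with $a-b$ odd survive, packaged as $v_{2s}^{(s)}=(-4)^s\operatorname{Pf}(B)$ for the checkerboard matrix $B$, is a reformulation of the paper's regrouping of the surviving permutations into pairs $(\varphi,\psi)$ of permutations of the odd and even indices, and your $\det C$ is, up to transposition and a factor $(-2)^s$, the determinant $D_s=\det\bigl(1/((2j)^2-(2k-1)^2)\bigr)$ of \eqref{Dd}. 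The constants do check out: the shuffle putting the odd indices first has sign $(-1)^{s(s-1)/2}$, which cancels against $\operatorname{Pf}\left(\begin{smallmatrix}0&C\\-C^{T}&0\end{smallmatrix}\right)=(-1)^{s(s-1)/2}\det C$, so $\operatorname{Pf}(B)=\det C$ and $(-4)^s\operatorname{Pf}(B)=2^{3s}D_s$, in agreement with Lemma~\ref{totalcomplexlemma}.

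The genuine gap is the final step, which you yourself label ``the main obstacle'' and then do not perform. After the Cauchy double alternant you are left with an explicit product, and the theorem \emph{is} precisely the assertion that this product, divided by $v_{2s}^{(0)}$, equals $2^{2s(s-1)}\binom{2s}{s}$; announcing a ``massive cancellation'' is not a proof of that identity, so as written the argument stops one necessary (if routine) step short of the stated result, and in particular integrality is not yet established. The paper closes this with a short device you can copy: from the double alternant compute the ratio $D_{s+1}/D_s=2^{8s}\big/\bigl((s+1)^2\binom{4s+3}{2s+1}\binom{4s+1}{2s}\bigr)$ as in \eqref{Ddquotients}, induct to get $D_s=\frac{2^{4s(s-1)}}{(s!)^2}\big/\prod_{j=0}^{2s-1}\binom{2j+1}{j}$, and compare with \eqref{vreell}. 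You should also pin down the sign you leave as ``$\pm$'' in $\operatorname{Pf}(B)=\pm\det C$: it is $+$ by the bookkeeping above, or can be forced a posteriori from $v_{2s}^{(s)}>0$, but an undetermined sign cannot stand in an explicit evaluation.
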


By \cite[Theorem 4.1 and Corollary 5.1]{API} we know that the denominators satisfy
\begin{equation}\label{vreell}
v_{2s}^{(0)}= \frac {2^{s(2s+1)}}{(2s)!}/\prod_{j=0}^{2s-1}\binom
{2j+1}j.
\end{equation}
Therefore, the assertion of the theorem is equivalent to
\begin{equation}\label{vtotalkomplex}
v_{2s}^{(s)}= \frac {2^{s(4s-1)}}{(s!)^2}/\prod_{j=0}^{2s-1}\binom
{2j+1}j.
\end{equation}

As a main step for the proof of Theorem \ref{totalcomplex} we establish the following result.
\begin{lemma} \label{totalcomplexlemma}
For $s\in\mathbb{N}$ we have
\begin{equation*}
v_{2s}^{(s)} =  2^{3s}D_s,
\end{equation*}
where
\begin{equation*}
D_s = \det \left(\frac 1{(2j)^2-(2k-1)^2}\right)_{1\le j,k\le s}.
\end{equation*}
\end{lemma}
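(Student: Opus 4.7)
By Theorem~2.1 of \cite{API}, parametrising the conjugate pairs as $z_j = x_j + \I y_j$ with $y_j>0$ and using the Jacobian $|V(z_1,\bar z_1,\ldots,z_s,\bar z_s)|$ of the passage from roots to coefficients, the volume admits the Selberg-type representation
\[
v_{2s}^{(s)} = \frac{2^{2s}}{s!}\int_{U_s} \prod_{j=1}^{s} y_j \prod_{1\le j<k\le s}|z_j-z_k|^2\,|z_j-\bar z_k|^2 \, dx_1\,dy_1\cdots dx_s\,dy_s,
\]
where $U_s=\{(x_j,y_j)\in\R^{2s}\,:\,x_j^2+y_j^2<1,\,y_j>0\text{ for all }j\}$ and the factor $1/s!$ accounts for the unordering of the pairs. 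The lemma thus reduces to showing that the integral equals $2^{s}\,s!\,D_s$.

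Following the approach suggested by Alastuey--Jancovici \cite{AlastueyJankovici:xx}, observe that, up to the explicit constant $(-2\I)^{-s}$, the integrand equals the Vandermonde determinant $V=\det(w_a^{b-1})_{1\le a,b\le 2s}$ evaluated at the paired points $(w_{2j-1},w_{2j})=(z_j,\bar z_j)$. Expanding $V=\sum_{\pi\in S_{2s}}\operatorname{sgn}(\pi)\prod_{j}z_j^{\pi(2j-1)-1}\bar z_j^{\pi(2j)-1}$ and interchanging sum with integral reduces the task to computing, for each pair $(p,q)$, the one-variable moment
\[
N_{p,q}=\int_{|z|<1,\,\Im z>0}z^p\bar z^q\,dx\,dy = \frac{1}{p+q+2}\int_0^{\pi}e^{\I(p-q)\theta}\,d\theta,
\]
whose angular part vanishes precisely when $p-q$ is a nonzero even integer.

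The classical identity $\sum_{\pi\in S_{2s}}\operatorname{sgn}(\pi)\prod_{j=1}^{s}B_{\pi(2j-1),\pi(2j)}=2^{s}\,s!\,\operatorname{Pf}(\tilde B)$ for the antisymmetric part $\tilde B_{ab}=(B_{ab}-B_{ba})/2$ collapses the sum into a Pfaffian of a $2s\times 2s$ antisymmetric matrix $C$ with entries $C_{ab}=(N_{a-1,b-1}-N_{b-1,a-1})/2$. Since $N_{p,q}-N_{q,p}$ vanishes whenever $p-q$ is even, $C_{ab}$ is zero whenever $a-b$ is even; sorting indices by parity makes $C$ block anti-diagonal, and $\operatorname{Pf}(C)$ reduces to the determinant of the $s\times s$ off-diagonal block $F$. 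A short computation gives
\[
F_{jk}=\frac{-2\I}{(2k)^2-(2j-1)^2},
\]
so that $\det(F)=(-2\I)^s D_s$ (after recognising the matrix as the transpose of the one defining $D_s$); the factor $(-2\I)^{-s}$ in front of the integral then cancels the $(-2\I)^s$ coming out of $\det(F)$, and combining with the prefactor $2^{2s}/s!$ yields $v_{2s}^{(s)}=2^{3s}D_s$.

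\textbf{Main obstacle.} The decisive conceptual step is recognising that the $(2s)!$-fold expansion of $V$ collapses to an $s\times s$ determinant through a Pfaffian that is automatically block anti-diagonal thanks to the parity vanishing of $N_{p,q}-N_{q,p}$; this is the ``pair-Vandermonde to determinant'' reduction suggested by the random-matrix viewpoint of \cite{AlastueyJankovici:xx}. Once this reduction is in place, identifying the resulting block with (a transpose of) the matrix of $D_s$ and checking that the various powers of $\pm 2\I$ and signs conspire to leave exactly $2^{3s}$ is mechanical but requires careful bookkeeping.
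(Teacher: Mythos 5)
Your argument is correct, and up to the moment computation it coincides with the paper's own proof: both start from \cite[Theorem~2.1]{API}, pass to the variables $(x_j,y_j)$ with Jacobian $4y_j$ so that the integrand becomes the order-$2s$ Vandermonde $V(z_1,\bar z_1,\ldots,z_s,\bar z_s)$ (your prefactor $2^{2s}(-2\mathrm{i})^{-s}/s!$ equals the paper's $2^s\mathrm{i}^s/s!$), expand $V$ over $\mathfrak{S}_{2s}$, and integrate termwise in polar coordinates, which produces exactly the moments $N_{p,q}$ with the parity vanishing you state. Where you genuinely diverge is in collapsing the $(2s)!$-term sum: the paper parametrizes the surviving permutations explicitly as pairs $(\varphi,\psi)$ of permutations of the odd and of the even indices together with pair-interchanges, absorbs the signs by hand, and recognizes the inner sum over $\psi$ as a column-permuted copy of $D_s$; you instead invoke the antisymmetrization identity $\sum_{\pi}\operatorname{sgn}(\pi)\prod_j B_{\pi(2j-1),\pi(2j)}=2^s\,s!\,\operatorname{Pf}(\tilde B)$ and use the checkerboard vanishing of $C$ to reduce the Pfaffian to the determinant of the odd--even block $F$, whose entries you compute correctly as $-2\mathrm{i}/\bigl((2k)^2-(2j-1)^2\bigr)$. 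The Pfaffian route is tidier and makes the parity mechanism conceptually transparent; its one delicate point, which you assert rather than verify, is that $\operatorname{Pf}(C)=\det F$ with no residual sign --- the parity-sorting permutation and the Pfaffian of a block anti-diagonal matrix each contribute $(-1)^{s(s-1)/2}$ and these cancel --- so your bookkeeping does close and the constants combine to $2^{3s}$ exactly as claimed. The paper's hands-on decomposition achieves the same cancellation more elementarily ($\operatorname{sgn}(\varphi)^2=1$ and the $s!$ from summing over $\varphi$), at the cost of more explicit sign-chasing and without appealing to Pfaffian machinery.
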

\begin{proof}
We start from \cite[Theorem 2.1]{API} for the case of $r=0$ real
roots and $s$ pairs of complex conjugate roots
\begin{equation}\label{eq:not21}
z_j = x_j + iy_j \quad\hbox{and}\quad
\overline{z_j} = x_j - iy_j
 \qquad (1\le j\le s),
\end{equation}
where we use the notation $(X-z_j)(X-\overline{z_j})=X^2-\alpha_j X
+ \beta_j,$ so that
\begin{equation}\label{eq:not22}
\alpha_j = 2x_j \quad\hbox{and}\quad
\beta_j =x_j^2 + y_j^2 \qquad (1\le j\le s).
\end{equation}
With this notation \cite[Theorem 2.1]{API} states that
\begin{equation*}
\begin{split}
v_{2s}^{(s)}= \frac 1 {s!}\int_{D_{0,s}}\prod_{1\le j<k\le s}
(z_j-z_k)(z_j-\overline{z_k})(\overline{z_j}-z_k)(\overline{z_j}-\overline{z_k})\:d\alpha_1\cdots\:d\alpha_s\:d\beta_1\cdots
\:d\beta_s,
\end{split}
\end{equation*}
where $D_{0,s}$ denotes the region $0\le \beta_j\le 1$,
$-2\sqrt{\beta_j}\le \alpha_j\le 2\sqrt{\beta_j}$ for all $1\le j\le
s$.
Since
\begin{equation*}
\begin{split}
\det\frac{\partial{(\alpha_j,\beta_j)}}{\partial{(x_j,y_j)}} = 4y_j
= 2i (\overline{z_j}-z_j),
\end{split}
\end{equation*}
we get

\begin{equation}\label{eq:vorDefDet}
\begin{split}
v_{2s}^{(s)}= \frac {2^s i^s} {s!}\int\limits_{\substack{(x_j,y_j)\in B^+ \\ 1\leq j\leq s}}&\left(\prod_{1\le j<k\le s} (z_k-z_j)(z_k-\overline{z_j})(\overline{z_k}-z_j)(\overline{z_k}-\overline{z_j})\right)\\
      &\cdot \prod_{j=1}^s(\overline{z_j}-z_j)\:d x_1\cdots\:d x_s\:d y_1\cdots \:d y_s
\end{split}
\end{equation}
with $B^+$ denoting the upper half of the unit disk. In the
following step of computation we adopt an idea used by Physicists in
the computation of certain Coulomb potentials (see {\it
e.g.}~\cite{AlastueyJankovici:xx}). Observe that the integrand in
the last integral equals the Vandermonde determinant
$V(z_1,\overline{z_1}, \ldots, z_s, \overline{z_s})$ of order $2s.$
In the following, we denote by $\mathfrak{S}_{m}$ the symmetric group on $m$ letters. 
Inserting the definition of the determinant in \eqref{eq:vorDefDet} we obtain with $\sigma_k := \sigma (k)$ that
\begin{equation}\label{eq:postvdm}
\begin{split}
v_{2s}^{(s)}= \frac {2^s i^s} {s!}\int\limits_{\substack{(x_j,y_j)\in B^+ \\ 1\leq j\leq s}}&\left( \sum_{\sigma\in \mathfrak{S}_{2s}} \sgn{\sigma}
z_1^{\sigma_1-1}\overline{z_1}^{\sigma_2 -1}\cdots
z_s^{\sigma_{2s-1}-1}\overline{z_s}^{\sigma_{2s}-1}\right)
      \:d x_1\:d y_1\cdots \:d x_s\:d y_s.
\end{split}
\end{equation}
From this, transforming to polar coordinates $x_j + i y_j = r_j \exp(i\theta_j)$, we get
\begin{equation}\label{eq:2odd1}
\begin{split}
v_{2s}^{(s)}= \frac {2^s i^s} {s!}&\sum_{\sigma\in
\mathfrak{S}_{2s}} \sgn{\sigma} \prod_{j=1}^s\int_{r_j=0}^1
\int_{\theta_j=0}^\pi r_j^{\sigma_{2j-1}+\sigma_{2j}-1}\exp
\big(i(\sigma_{2j-1}-\sigma_{2j})\theta_j\big)\:d \theta_j\:d r_j.
\end{split}
\end{equation}
For $f:\mathbb{R}\to\mathbb{C}$, we set  $\int f(t)\:d t = \int \operatorname{Re}( f(t))\:d t + i \int  \operatorname{Im}( f(t))\:d t$. Then $\int \exp(i\alpha t)\:d t = \frac{\exp (i\alpha t)}{i\alpha}$ for $\alpha\neq0$,
and we have
\begin{equation}\label{eq:2odd2}
\begin{split}
v_{2s}^{(s)}= \frac {2^s} {s!}&\sum_{\sigma\in \mathfrak{S}_{2s}}
\sgn{\sigma} \prod_{j=1}^s \frac
1{\sigma_{2j-1}+\sigma_{2j}}\frac{(-1)^{\sigma_{2j-1}-\sigma_{2j}}-1}{\sigma_{2j-1}-\sigma_{2j}}.
\end{split}
\end{equation}
Observe that the numerators in the last fraction are equal to $0$ if
$\sigma_{2j-1}-\sigma_{2j}$ is even and equal to $-2$ if
$\sigma_{2j-1}-\sigma_{2j}$ is odd. Therefore, only permutations
$\sigma\in \mathfrak{S}_{2s}$, for which $\sigma_{2j-1}-\sigma_{2j}$
is odd for all $1\le j \le s$ give a nonzero contribution to the
sum. A permutation of this type has the form
\begin{equation*}
\sigma = (\varphi, \psi) = \begin{pmatrix} 1 & 2 & 3 & 4 & \dots & 2j-1 & 2j & \dots & 2s-1 & 2s\\
          \varphi(1) & \psi(2) & \varphi(3) & \psi(4) & \dots & \varphi({2j-1}) & \psi({2j}) & \dots & \varphi({2s-1}) & \psi({2s})
          \end{pmatrix},
\end{equation*}
where $\varphi$ is a permutation in $\mathfrak{S}_{2s}^{o}$, the set of all permutations of $\{1,3,\ldots, 2s-1\}$, and  $\psi$ is a permutation in $\mathfrak{S}_{2s}^{e}$, the set of all prmutations of $\{2,4,\ldots, 2s\}$, or has a form where any of the pairs $(\varphi({2j-1}), \psi({2j}))$ in the second line are interchanged. Each interchange of the latter type
will change the sign of the permutation by the factor $-1$.
Therefore, setting $\varphi_j:=\varphi({2j-1}), \psi_j:=
\psi({2j})$, we get
\begin{equation}\label{eq:2odd3}
\begin{split}
v_{2s}^{(s)}= \frac {2^s} {s!}&
\sum_{\begin{subarray}{c}(\varphi, \psi)\\ \varphi\in
\mathfrak{S}_{2s}^{o}, \psi \in \mathfrak{S}_{2s}^{e}\end{subarray}}
 \text{sgn}
(\varphi, \psi) \prod_{j=1}^s \frac 1{\varphi_j
+\psi_j}\left(\frac{(-1)^{\varphi_j -\psi_j}-1}{\varphi_j
-\psi_j}-\frac{(-1)^{\psi_j -\varphi_j}-1}{\psi_j
-\varphi_j}\right).
\end{split}
\end{equation}
Since $\varphi_j -\psi_j$ is odd, the term in the round brackets
equals $\frac 4{\varphi_j -\psi_j},$ which yields
\begin{equation*}
\begin{split}
v_{2s}^{(s)}= \frac {2^s} {s!}2^{2s}\sum_{(\varphi, \psi)}
\text{sgn} (\varphi, \psi) \prod_{j=1}^s \frac 1{\psi_j^2
-\varphi_j^2}.
\end{split}
\end{equation*}
Observe that
\begin{equation*}
\sigma = (\varphi, \psi) = \begin{pmatrix} 1 & \dots & 2j-1  & \dots & 2s-1 \\
          \varphi_1  & \dots & \varphi_j & \dots & \varphi_s
          \end{pmatrix}  \circ \begin{pmatrix} 2 & \dots & 2j  & \dots & 2s \\
          \psi_1  & \dots & \psi_j & \dots & \psi_s
          \end{pmatrix} = \varphi \circ \psi,
\end{equation*}
if  $\varphi$ and $\psi$ are regarded as elements of
$\mathfrak{S}_{2s}$. Therefore, $\text{sgn} (\varphi, \psi) =
\text{sgn} (\varphi) \text{sgn} (\psi)$ and we get
\begin{equation*}
\begin{split}
v_{2s}^{(s)}= \frac {2^{3s}} {s!}\sum_{\varphi\in
\mathfrak{S}_{2s}^{o}} \text{sgn} (\varphi) \sum_{\psi\in
\mathfrak{S}_{2s}^{e}} \text{sgn} (\psi)\prod_{j=1}^s \frac
1{\psi_j^2 -\varphi_j^2}.
\end{split}
\end{equation*}
By the definition of the determinant we have
\begin{equation*}
\begin{split}
 \sum_{\psi\in \mathfrak{S}_{2s}^{e}} \text{sgn} (\psi)\prod_{j=1}^s \frac 1{\psi_j^2 -\varphi_j^2} = \det \left(\frac 1{(2j)^2-\varphi_k^2}\right)_{1\le j,k\le s}.
\end{split}
\end{equation*}
The determinant on the right hand side needs $\text{sgn} (\varphi)$ interchanges of columns to be transformed to
\begin{equation}\label{Dd}
\begin{split}
  D_s:= \det \left(\frac 1{(2j)^2-(2k-1)^2}\right)_{1\le j,k\le s}.
\end{split}
\end{equation}
Therefore, we have established
\begin{equation}
\begin{split}
 v_{2s}^{(s)}= \frac {2^{3s}} {s!}\sum_{\varphi\in \mathfrak{S}_{2s}^{o}} \text{sgn} (\varphi) \text{sgn} (\varphi) D_s = 2^{3s}D_s,
\end{split}
\end{equation}
which completes the proof of the lemma.
\end{proof}
We finish the proof of Theorem~\ref{totalcomplex} by the following product formula for the determinant $D_s.$
\begin{lemma}
\begin{equation}
D_s = \frac {2^{4s(s-1)}}{(s!)^2}/\prod_{j=0}^{2s-1}\binom {2j+1}j.
\end{equation}
\end{lemma}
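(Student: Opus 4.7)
The plan is to recognise $D_s$ as a Cauchy double alternant. Substituting $x_j := (2j)^2$ and $y_k := (2k-1)^2$ into the identity $\det\bigl(1/(x_j-y_k)\bigr) = \prod_{j<k}(x_k-x_j)(y_j-y_k)/\prod_{j,k}(x_j-y_k)$ gives
\[
D_s = \frac{\prod_{1\le j<k\le s}\bigl((2k)^2-(2j)^2\bigr)\bigl((2j-1)^2-(2k-1)^2\bigr)}{\prod_{j,k=1}^s\bigl((2j)^2-(2k-1)^2\bigr)}.
\]
Every factor is a difference of squares, so applying $a^2-b^2=(a-b)(a+b)$ throughout turns the numerator into $(-16)^{\binom{s}{2}}\prod_{j<k}(k-j)^2(k+j)(k+j-1)$, and the denominator splits as $\prod_{j,k}(2j-2k+1)\cdot\prod_{j,k}(2j+2k-1)$. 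The first denominator product carries a sign $(-1)^{\binom{s}{2}}$ from the pairs $j<k$, which exactly cancels the sign in the numerator so that $D_s$ is positive.

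The remaining task is to evaluate the five resulting products in closed form. The three numerator products are standard: $\prod_{j<k}(k-j) = \prod_{m=0}^{s-1}m!$, $\prod_{j<k}(k+j) = \prod_{k=1}^s (2k-1)!/k!$ and $\prod_{j<k}(k+j-1) = \prod_{k=1}^{s-1}(2k)!/k!$. The two denominator products I would compute row by row using double factorials: for fixed $j$ one has $\prod_{k=1}^s|2j-2k+1| = (2j-1)!!\cdot(2s-2j-1)!!$ and $\prod_{k=1}^s(2j+2k-1) = (2j+2s-1)!!/(2j-1)!!$. Applying the closed form $(2m-1)!! = (2m)!/(2^m m!)$ and multiplying over $j$ expresses both products as quotients of products of factorials, with explicit powers of $2$.

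Collecting everything and using the elementary factorisation $\prod_{j=0}^{2s-1}\binom{2j+1}{j} = \prod_{j=0}^{2s-1}(2j+1)!\big/\prod_{j=0}^{2s-1}j!(j+1)!$, the claim reduces to a purely factorial identity. The decisive simplification is
\[
\prod_{m=1}^{2s}\frac{(2m)!}{(2m-1)!} = \prod_{m=1}^{2s}(2m) = 2^{2s}(2s)!,
\]
which bridges the product of even factorials produced by evaluating the denominator to the product of odd factorials appearing in $\prod_j\binom{2j+1}{j}$. The main obstacle is not any single clever step but the disciplined bookkeeping required to reorganise all these factorials and to assemble the power of $2$ into $2^{4s(s-1)}$; no tools beyond Cauchy's formula, double factorials, and elementary factorial identities are needed.
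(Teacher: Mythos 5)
Your proposal is correct, and its decisive step is the same as the paper's: both recognize $D_s$ as a Cauchy double alternant, you with $x_j=(2j)^2$, $y_k=(2k-1)^2$ and the paper with $X_j=(2j)^2$, $Y_k=-(2k-1)^2$, which is the same identity. Your sign analysis and the factorizations $(2k)^2-(2j)^2=4(k-j)(k+j)$, $(2j-1)^2-(2k-1)^2=-4(k-j)(k+j-1)$, $(2j)^2-(2k-1)^2=(2j-2k+1)(2j+2k-1)$, as well as all the stated factorial and double-factorial evaluations, are accurate (and consistent with $D_1=1/3$, $D_2=32/525$). Where you diverge is the finish: you propose to evaluate the five products in closed form and reorganize the factorials directly into $2^{4s(s-1)}\big/\bigl((s!)^2\prod_{j=0}^{2s-1}\binom{2j+1}{j}\bigr)$, whereas the paper sidesteps that bookkeeping by forming the quotient $D_{s+1}/D_s$, which telescopes to $2^{8s}\big/\bigl((s+1)^2\binom{4s+3}{2s+1}\binom{4s+1}{2s}\bigr)$, and then concluding by induction on $s$. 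Both finishes work; the ratio-and-induction route is lighter because almost everything cancels, while your direct route hinges on exactly the factorial reorganization (including the bridge $\prod_{m=1}^{2s}(2m)!/(2m-1)!=2^{2s}(2s)!$ you identify) that you describe but do not actually carry out. That final assembly is routine given your correctly stated ingredients, but to have a complete proof you should either write it out in full or adopt the paper's trick of inducting on the telescoping ratio.
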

\begin{proof}
We start with the observation that $D_s$ is of the form
\begin{equation*}
\begin{split}
 D_s = \det \left(\frac 1{X_j+Y_k}\right)_{1\le j,k\le s}, \text{ with } X_j=(2j)^2 \text{ and } Y_k=-(2k-1)^2.
\end{split}
\end{equation*}
Using the Cauchy double alternant formula ({\it
cf.~e.g.}~\cite{Krattenthaler2001})
\begin{equation}\label{doublealternant}
\det \left(\frac 1{X_j+Y_k}\right)_{1\le j,k\le s}= \frac
{\prod\limits_{1\le j<k\le s}(X_j-X_k)(Y_j-Y_k)}{\prod\limits_{1\le
j,k \le s}(X_j+Y_k)}
\end{equation}
we find
\begin{equation*}
\begin{split}
 D_s = (-1)^{s(s-1)/2} 2^{2s(s-1)}\frac {\prod\limits_{1\le j<k\le s}(k-j)^2(k+j)(k+j-1))}{\prod\limits_{1\le j,k \le s}(2j-2k+1)(2j+2k-1)}.
\end{split}
\end{equation*}
Therefore,
\begin{equation}\label{Ddquotients}
\frac {D_{s+1}}{D_s} = 
\frac {2^{8s}}{(s+1)^2\binom{4s+3}{2s+1}\binom {4s+1}{2s}}.
\end{equation}
From the last equality the lemma follows immediately by induction.
\end{proof}
Thus the proof of Theorem \ref{totalcomplex} is complete.

\section{Preparations for the general case}\label{sec:prep}

This section contains some definitions and auxiliary results that will be needed in the proof of our main result.

Throughout the paper finite (totally) ordered sets will play a prominent role.
If $X$ and $Y$ are finite ordered sets we write $X\subseteq Y$ to state
that $X$ is a (possibly empty) ordered subset of $Y$ whose elements inherit the order from $Y$. Moreover $Y\setminus X$ denotes the finite ordered subset of $Y$ containing all elements of $Y$ that are not contained in $X$. Also other set theoretic notions will be carried over to finite ordered sets in the same vein. To indicate the order we sometimes write $\{X_1<\cdots < X_r\}$ instead of $\{X_1,\ldots, X_r\}$ for a finite ordered set. We also use the following notation. Given an ordered subset $I=\{i_1<\cdots < i_m\}$ of $\{1<\cdots< r\}$ we denote $\overline{I} = \{1<\cdots< r\} \setminus I$ and we write $X_I=\{X_{i_1} < X_{i_2} < \cdots < X_{i_m}\}$. 

For $r\in\mathbb{N}$ let $X=\{X_1 <\cdots < X_r\}$ be an ordered set of indeterminates and consider sums of the form
\begin{equation}\label{eq:newHm}
H(X)=H_r(X)=
\sum_{\sigma \in \mathfrak{S}_r} \text{sgn}(\sigma) \prod_{i=1}^r\frac{1-X_i}{1-X_{\sigma(1)}\cdots X_{\sigma(i)}}.
\end{equation}
We now give a closed form for convolutions of these sums which will immediately imply an identity that is needed in the proof of our main results. 

\begin{lemma}\label{lem:newconv}
Let $[r]=\{1<\cdots<r\}$. Then, setting $X=\{X_{1}  < \cdots < X_{r}\}$, the sums $H$ from \eqref{eq:newHm} admit the convolution formula
\begin{equation}\label{eq:nevConv}
\begin{split}
\sum_{K\subseteq [r]} H(X_K)H(X_{\overline{K}})= 2^{\lceil r/2 \rceil}
\prod_{\begin{subarray}{c}j=1 \\ j+r {\small \rm \; odd} \end{subarray}}^{r} (1+X_j)
\prod_{\begin{subarray}{c}j=1 \\ j {\small \rm \; even} \end{subarray}}^{r} (1-X_j)
\prod_{\begin{subarray}{c}1\le j < k \le r \\ j-k{\small \rm \; even} \end{subarray}} (X_j-X_k)
\prod_{\begin{subarray}{c}1\le j < k \le r \\ j-k{\small \rm \; odd} \end{subarray}} \frac{1}{1-X_jX_k}.
\end{split}
\end{equation}
Here the sum runs over all ordered subsets $K$ of $[r]$.
\end{lemma}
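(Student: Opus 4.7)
The plan is to proceed in three stages: (a) evaluate $H_r(X)$ in closed form; (b) reparametrise via $u_i:=(1+X_i)/(1-X_i)$ to convert the bipartition sum into a spin sum over $\{\pm1\}^r$; and (c) prove a polynomial identity in the $u_i$ which, translated back, yields the right-hand side.

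The first step is to establish
\begin{equation*}
H_r(X_1,\ldots,X_r)=\prod_{1\le i<j\le r}\frac{X_i-X_j}{1-X_iX_j}.
\end{equation*}
The $\sigma$-invariant factor $\prod_{i=1}^r(1-X_i)/(1-X_1\cdots X_r)$ pulls out of \eqref{eq:newHm}, reducing the claim to a standard Cauchy/Good-type summation over $\mathfrak{S}_r$; alternatively, both sides are antisymmetric rational functions with the same denominator structure, so induction on $r$ together with a suitable specialisation (say $X_r=0$) gives the result. Substituting this closed form in the convolution yields a sum over bipartitions of products of the weights $\frac{X_i-X_j}{1-X_iX_j}$ over same-side pairs $i<j$.

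Introducing $u_i=(1+X_i)/(1-X_i)$, one verifies $\frac{X_i-X_j}{1-X_iX_j}=\frac{u_i-u_j}{u_i+u_j}$ together with $u_i-u_j=\frac{2(X_i-X_j)}{(1-X_i)(1-X_j)}$ and $u_i+u_j=\frac{2(1-X_iX_j)}{(1-X_i)(1-X_j)}$. Encoding the bipartition by $\sigma_i\in\{\pm1\}$ (with $\sigma_i=+1$ iff $i\in K$), the elementary case-check $\bigl(\frac{u_i-u_j}{u_i+u_j}\bigr)^{[\sigma_i=\sigma_j]}=\frac{u_i-\sigma_i\sigma_ju_j}{u_i+u_j}$ converts the bipartition sum into
\begin{equation*}
\sum_{K\subseteq[r]}H(X_K)H(X_{\overline{K}})=\frac{1}{\prod_{i<j}(u_i+u_j)}\sum_{\sigma\in\{\pm1\}^r}\prod_{1\le i<j\le r}(u_i-\sigma_i\sigma_ju_j).
\end{equation*}

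The heart of the argument is then the polynomial identity
\begin{equation*}
\sum_{\sigma\in\{\pm1\}^r}\prod_{1\le i<j\le r}(u_i-\sigma_i\sigma_ju_j)=2^r\prod_{\substack{1\le i\le r\\i+r\text{ odd}}}u_i\prod_{\substack{1\le i<j\le r\\i\equiv j\pmod{2}}}(u_i^2-u_j^2).
\end{equation*}
Expanding the product and using $\sum_\sigma\prod_{(i,j)\in T}\sigma_i\sigma_j=2^r\cdot\mathbf{1}[T\text{ has only even vertex degrees}]$ for $T\subseteq E(K_r)$, the left-hand side reduces to $2^r\sum_{T\text{ Eulerian}}(-1)^{|T|}\prod_{(i,j)\in T}u_j\prod_{(i,j)\notin T}u_i$. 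This identity is then proved by induction on $r$: both sides are polynomials in $u_r$ of the same degree, setting $u_r=0$ reduces the left-hand side to $\prod_{i<r}u_i$ times the $(r-1)$-version (matching the right-hand side by the inductive hypothesis), and a single additional specialisation, say the leading coefficient in $u_r$, closes the induction. Substituting back via the formulas for $u_i\pm u_j$ above, all $(1-X_i)$-powers cancel exactly, the powers of $2$ combine to $2^{\lceil r/2\rceil}$, and the parity-split Vandermondes $(X_i-X_j)$ over same-parity pairs and Cauchy factors $1/(1-X_iX_j)$ over different-parity pairs emerge, together with the boundary factors $(1+X_j)$ for $j+r$ odd and $(1-X_j)$ for $j$ even. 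The main obstacle is the polynomial spin-sum identity in stage (c): its Eulerian-subgraph reformulation is straightforward, but the inductive verification requires careful polynomial-degree bookkeeping in $u_r$.
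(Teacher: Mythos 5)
Your route is genuinely different from the paper's (which proves \eqref{eq:nevConv} by showing that both sides satisfy the recurrence \eqref{eq:rech}, established via partial fraction expansions), and its skeleton is sound: starting from the closed form \eqref{eq:Hclosed} (which the paper simply cites from KLW), the substitution $u_i=(1+X_i)/(1-X_i)$ correctly turns the bipartition sum into $\bigl(\prod_{i<j}(u_i+u_j)\bigr)^{-1}\sum_{\sigma\in\{\pm1\}^r}\prod_{i<j}(u_i-\sigma_i\sigma_ju_j)$, your spin-sum identity is true (I checked the cases $r\le 3$ and the general statement), and the back-substitution bookkeeping (powers of $2$, the factors $(1+X_j)$ for $j+r$ odd and $(1-X_j)$ for $j$ even, the same-parity Vandermonde and the odd-parity Cauchy factors) does reproduce the right-hand side of \eqref{eq:nevConv} exactly.

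The genuine gap is in stage (c). Both sides of your identity, viewed as polynomials in $u_r$, have degree $r-1$ (for $r$ odd) resp.\ $r-2$ (for $r$ even), so matching the value at $u_r=0$ together with one leading coefficient gives only two interpolation conditions and cannot force equality once $r\ge 4$; even if you additionally observe that both sides are even in $u_r$ (flip $\sigma_r$), you would need roughly $r/2$ conditions, so the induction as described does not close, and the Eulerian-subgraph reformulation is never actually used to supply the missing conditions. The identity can, however, be proved outright without induction: writing $u_i-\sigma_i\sigma_ju_j=\sigma_i(\sigma_iu_i-\sigma_ju_j)$ gives
\begin{equation*}
\prod_{1\le i<j\le r}(u_i-\sigma_i\sigma_ju_j)=\Bigl(\prod_{i=1}^r\sigma_i^{\,r-i}\Bigr)\prod_{1\le i<j\le r}(w_i-w_j),\qquad w_i:=\sigma_iu_i;
\end{equation*}
expanding the Vandermonde as $\sum_{\tau\in\mathfrak{S}_r}\operatorname{sgn}(\tau)\prod_i w_i^{\,r-\tau(i)}$ and summing over $\sigma\in\{\pm1\}^r$ annihilates every $\tau$ except those with $\tau(i)\equiv i\pmod 2$ for all $i$ (each surviving term picking up $2^r$), and these $\tau$ split into a permutation of the odd and one of the even positions, so the sum factors into the two parity Vandermonde determinants in the $u_i^2$ with the monomial prefactor $\prod_{i+r\;{\rm odd}}u_i$ — precisely your right-hand side. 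With this (or an equivalent divisibility-plus-degree argument showing the spin sum vanishes at $u_r=\pm u_i$ for $i\equiv r\bmod 2$), your proof is complete, and it is arguably more direct than the paper's recurrence argument; as written, though, the inductive verification of the central identity does not stand.
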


\begin{proof}
First we note that, according to \cite[${\rm 3^{rd}}$ identity on p.\ 6812]{KLW:15}, for a finite ordered set $X$ the sum $H(X)$ defined in \eqref{eq:newHm} admits the closed form
\begin{equation}\label{eq:Hclosed}
H(X) =  \prod_{1\le j<k\le r}\frac{X_j-X_k}{1-X_jX_k}.
\end{equation}
Indeed, \eqref{eq:Hclosed} is proved by induction using a recurrence relation which is established by splitting the sum in \eqref{eq:newHm} w.r.t.\ the value of $\sigma(m)$  (see \cite[proof of equation~(4.9)]{KLW:15}).
This closed form will be used throughout the proof.  

Let $f_r(X_1,\ldots, X_r)$ and $g_r(X_1,\ldots, X_r)$ denote the left and right hand side of \eqref{eq:nevConv}, respectively. To prove the lemma it suffices to show that $f_r$ as well as $g_r$ satisfy the recurrence
\begin{equation}\label{eq:rech}
\begin{split}
h_r(X_1,\ldots,X_r)
\,=\,&
\frac {(1+X_{r-1})(1-X_{r})}
      {1-X_{r-1}X_r}
      h_{r-1}(X_1,\ldots,X_{r-1})\\
&-\sum_{\begin{subarray}{c}i=1\\ i+r \,\rm{even} \end{subarray}}^{r-1}
  \frac{(1+X_{r-1})(1-X_i)}
       {1-X_{r-1}X_i}
h_{r-1}(X_1,\ldots,X_{i-1},
     X_r,X_{i+1},\ldots,X_{r-1})
\end{split}
\end{equation}
for $r\ge 2$ with $h_1(X_1)=2$. This can be proved using the partial fraction technique.

In order to establish the recurrence \eqref{eq:rech} for $f_r$ we first observe that, since the summands on the left hand side of \eqref{eq:nevConv} are invariant under the involution $K\mapsto \overline{K}$,
\begin{equation}\label{eq:barring}
\sum_{K\subseteq [r]} H(X_K)H(X_{\overline{K}})=2 \sum_{\begin{subarray}{c} K\subseteq [r] \\ r-1 \in K \end{subarray}} H(X_K)H(X_{\overline{K}})=2 \sum_{\begin{subarray}{c} K\subseteq [r] \\ r-1 \not\in K \end{subarray}} H(X_K)H(X_{\overline{K}}).
\end{equation}
Now set
\begin{align*}
p_{r+1}(X_1,\ldots,X_{r+1}) =&  \sum_{\begin{subarray}{c} K\subseteq [r] \\ r-1 \in K \end{subarray}} H(X_K)H(X_{\overline{K}}) \prod_{i\in K}\frac{X_i-X_{r+1}}{1-X_iX_{r+1}},  \\ 
q_{r+1}(X_1,\ldots,X_{r+1}) =& \sum_{\begin{subarray}{c} K\subseteq [r] \\ r-1 \not\in K \end{subarray}} H(X_K)H(X_{\overline{K}}) \prod_{i\in K}\frac{X_i-X_{r+1}}{1-X_iX_{r+1}}. 
\end{align*}
We evaluate $p_{r+1}(X_1,\ldots,X_{r}, -1)$ in two ways. Firstly, we set $X_{r+1}=-1$ directly in the definition of $p_{r+1}(X_1,\ldots,X_{r+1})$ and use \eqref{eq:barring} to replace the condition $r-1\in K$ by a factor $\frac12$ in the resulting expression. Secondly, we insert the partial fraction expansion
\[
\begin{split}
\frac{1}{1-X_{r-1}X_{r+1}}\prod_{\begin{subarray}{c} i\in K \\ i\neq r-1 \end{subarray}}\frac{X_i-X_{r+1}}{1-X_iX_{r+1}}
=&
\sum_{\begin{subarray}{c} i\in K \\ i\not=r-1 \end{subarray}}\frac{1-X_i^2}{(X_{r-1}-X_i)(1-X_iX_{r+1})}
\prod_{ j\in K\setminus\{i,r-1\}}\frac{1-X_jX_i}{X_j-X_i} \\
&+ \frac{1}{1-X_{r-1}X_{r+1}}\prod_{ j\in K\setminus\{r-1\}}\frac{1-X_jX_{r-1}}{X_j-X_{r-1}}
\end{split}
\]
in the product containing the indeterminate $X_{r+1}$ in $p_{r+1}$, then set $X_{r+1}=-1$, interchange the sums, and reorder the indeterminates appropriately. This yields the identity ($\delta_{r,J}$ is $1$ if $r\in J$ and $0$ otherwise)
\begin{equation}\label{eq:rec1}
\begin{split}
\frac12f_{r} (X_1,\ldots, X_r)=& \, p_{r+1}(X_1,\ldots,X_{r}, -1) \\
&\hskip -1cm = -(1+X_{r-1})\sum_{i =1}^{r-2}
\frac{1-X_i}{1-X_{i}X_{r-1}}
\sum_{\begin{subarray}{c} J \subseteq \{1<\cdots < i-1 < r-1 < i+1 < \cdots < r-2 < r\} \\ r-1 \in J \end{subarray}}
\hskip -1.7cm H(X_J)H(X_{\overline{J}})(-1)^{\delta_{r,J}}\\
& + \sum_{J \subseteq \{1 < \cdots < r-2 < r\}}H(X_J)H(X_{\overline{J}})(-1)^{\delta_{r,J}}\\
&+\frac{(1+X_{r-1})(1-X_r)}{1-X_{r}X_{r-1}}
\sum_{\begin{subarray}{c} J \subseteq \{1 < \cdots < r-2 < r-1\} \\ r-1 \in J \end{subarray}}
H(X_J)H(X_{\overline{J}})
\end{split}
\end{equation}
(note that the sum in the second line vanishes by \eqref{eq:barring}). 

Evaluating the sum $q_{r+1}(X_1,\ldots,X_{r+1})$ in two ways using the partial fraction expansion w.r.t.\ $X_{r+1}$ of 
\[
\frac{1}{X_{r-1}-X_{r+1}}\prod_{\begin{subarray}{c} i\in K 
\end{subarray}}\frac{X_i-X_{r+1}}{1-X_iX_{r+1}},
\]
(in this case $r-1\not\in K$ holds), by similar reasoning as above we gain the formula
\begin{equation}\label{eq:rec2}
\begin{split}
\frac12 f_{r} (X_1,\ldots, X_r)=&\, q_{r+1}(X_1,\ldots,X_{r}, -1) \\
& \hskip -1.5cm =-(1+X_{r-1})\sum_{i =1}^{r-2}
\frac{1-X_i}{1-X_{i}X_{r-1}}
\sum_{\begin{subarray}{c} J \subseteq \{1<\cdots < i-1 < r-1 < i+1 < \cdots < r-2 < r\} \\ r-1 \in J \end{subarray}}
\hskip -1.7cm H(X_J)H(X_{\overline{J}})(-1)^{r-i+\delta_{r,J}}
\\
&
+\frac{(1+X_{r-1})(1-X_r)}{1-X_{r}X_{r-1}}
\sum_{\begin{subarray}{c} J \subseteq \{1 < \cdots < r-2 < r-1\} \\ r-1 \in J \end{subarray}}
H(X_J)H(X_{\overline{J}}).
\end{split}
\end{equation}

Adding \eqref{eq:rec1} and \eqref{eq:rec2} the summands corresponding to indices $i$ with $r-i$ odd vanish. In the remaining summands interchanging the indeterminates $X_{r}$ and $X_{r+1}$ absorbs the sign $(-1)^{\delta_{r,J}}$ and, hence, \eqref{eq:rech} holds for $f_r$.

To show that $g_r$ also satisfies \eqref{eq:rech} we start with $g_{r+1}(X_1,\ldots, X_{r+1})$. Indeed, to get the left hand side of \eqref{eq:rech} just set $X_{r+1}=-1$. For the right hand side expand $g_{r+1}(X_1,\ldots, X_{r+1})/(X_{r+1}-X_{r-1})$ in partial fractions w.r.t.\  $X_{r+1}$, multiply by $X_{r+1}-X_{r-1}$, and set $X_{r+1}=-1$ again. 
\end{proof}

Let $r\in\mathbb{N}$ and let $Y=\{Y_{1}  < \cdots < Y_{r}\}$ be an ordered set of indeterminates. Later we will need sums of the form
\begin{equation}\label{newSm}
S(Y)=S_r(Y):= \sum_{\sigma\in \mathfrak{S}_r} \sgn{\sigma} \frac
1{Y_{\sigma(1)}(Y_{\sigma(1)}+Y_{\sigma(2)})\cdots
(Y_{\sigma(1)}+\cdots+Y_{\sigma(r)})}.
\end{equation}

Substituting $X_i=q^{Y_i}$
and letting $q$ tend to $1$ it  now follows from \eqref{eq:Hclosed} that
\begin{equation}\label{eq:SRNEW}
S_r(Y) = \frac{1}{Y_1\cdots Y_n} \prod_{1\le j  < k \le r} \frac{Y_k-Y_j}{Y_k+Y_j}.
\end{equation}

By the same operations, the following corollary, which will be used later on, is an immediate consequence of  \eqref{eq:SRNEW} and Lemma~\ref{lem:newconv}.


\begin{corollary}\label{ConvolutionSdlemma}
Let $[r]=\{1<\cdots<r\}$. Then, setting $Y=\{Y_{1}  < \cdots < Y_{r}\}$, the sums $S_m$ from \eqref{newSm} admit the convolution formula
\begin{equation*}
\sum_{K \subseteq [r]} S_{|K|}(Y_K)S_{r-|K|}(Y_{\overline{K}}) = 2^r
\prod_{\begin{subarray}{c}1\le j < k \le r \\ j-k{\small \rm \; even}\end{subarray}} (Y_{k}-Y_{j})
\prod_{\begin{subarray}{c}j=1 \\ j{\small \rm \; odd}\end{subarray}}^r\frac1{Y_j}
\prod_{\begin{subarray}{c}1\le j < k \le r \\ j-k{\small \rm \; odd}\end{subarray}} \frac{1}{Y_{k}+Y_{j}}.
\end{equation*}
Here the sum runs over all ordered subsets $K$ of $[r]$.
\end{corollary}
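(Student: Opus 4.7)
The plan is to derive the corollary from Lemma~\ref{lem:newconv} via the specialization $X_i = q^{Y_i}$ followed by the limit $q \to 1$, in precisely the same spirit as \eqref{eq:SRNEW} was deduced from \eqref{eq:Hclosed}. The substitution preserves the order structure of the subsets, so the summation over $K \subseteq [r]$ in \eqref{eq:nevConv} matches the one in the statement of the corollary verbatim.

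First I would analyze the limit of the left hand side. Applying $\lim_{q\to 1}(1-q^a)/(1-q^b) = a/b$ factor by factor in the sum defining $H(X_K)|_{X_i=q^{Y_i}}$, pulling the $\sigma$-independent numerator $\prod_{i\in K}Y_i$ out, and comparing with the definition \eqref{newSm}, one obtains $H(X_K)|_{X_i = q^{Y_i}} \to \prod_{i\in K} Y_i \cdot S_{|K|}(Y_K)$. Summing over $K \subseteq [r]$ and using that $\prod_{i\in K}Y_i \cdot \prod_{i\in \overline{K}}Y_i = \prod_{i=1}^r Y_i$ is $K$-independent, the left hand side of \eqref{eq:nevConv} tends to $\prod_{i=1}^r Y_i$ times the sum appearing on the left hand side of the corollary.

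The main step is the evaluation of the limit of the right hand side of \eqref{eq:nevConv}. Writing $\epsilon = 1-q$, Taylor expansion at $q=1$ yields
\[
1+q^{Y_j}\to 2, \qquad 1-q^{Y_j}\sim Y_j\epsilon, \qquad q^{Y_j}-q^{Y_k}\sim (Y_k-Y_j)\epsilon, \qquad \frac{1}{1-q^{Y_j+Y_k}}\sim \frac{1}{(Y_j+Y_k)\epsilon}.
\]
The technical obstacle here is the parity bookkeeping. With $r_o = \lceil r/2 \rceil$ and $r_e = \lfloor r/2 \rfloor$, the total exponent of $\epsilon$ on the right hand side comes to $r_e + \binom{r_o}{2} + \binom{r_e}{2} - r_o r_e$, and a short case distinction according to the parity of $r$ shows that this expression vanishes identically. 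Similarly, the prefactor $2^{\lceil r/2\rceil}$ combined with the $r - \lceil r/2 \rceil$ factors coming from the limits $1+q^{Y_j}\to 2$ assembles to the constant $2^r$.

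Equating the two limits and dividing by the common factor $\prod_{i=1}^r Y_i$, the remaining factor $\prod_{j \text{ even}} Y_j / \prod_{i=1}^r Y_i$ collapses to $\prod_{j\text{ odd}} Y_j^{-1}$, which is exactly the factor appearing in the statement of the corollary. Once the two parity verifications above are carried out, the asserted identity is immediate.
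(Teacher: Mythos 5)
Your proposal is correct and is essentially the paper's own argument: the corollary is obtained there in exactly the same way, by substituting $X_i=q^{Y_i}$ into Lemma~\ref{lem:newconv} and letting $q\to 1$, mirroring the derivation of \eqref{eq:SRNEW} from \eqref{eq:Hclosed}. Your parity bookkeeping (the vanishing exponent $r_e+\binom{r_o}{2}+\binom{r_e}{2}-r_or_e$ of $1-q$ and the assembly of the factor $2^r$) simply fills in the details the paper leaves implicit, and it checks out.
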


We now define some notions related to the parity of the elements of a finite ordered set.

\begin{definition}\label{def:oe}
A finite ordered set $X$ consisting of $\lceil |X|/2\rceil$ odd and $\lfloor |X|/2\rfloor$ even numbers  is said to be in {\em increasing oe-order}, if it is of the shape odd, even, odd, even, ..., where the odd and the even ordered subset are both strictly increasing. We denote the sign of the permutation that brings the elements of $X$ from their natural order to the increasing oe-order by $\oesgn{X}$.

Let $X$ be a finite ordered set of strictly ascending positive integers. Define the finite ordered sets  $X_1$ and $X_2$ such that $2X_1$ and $2X_2-1$ constitute the even and odd elements of the finite ordered set $X$ in ascending order, respectively. Then $(X_1,X_2)$ is called the \emph{parity splitting} of $X$. The finite ordered set $X$  is called \emph{parity balanced} if $|X_1|=|X_2|$, {\em i.e.}, if $X$ contains the same number of odd and even elements.
\end{definition}

These concepts are best illustrated by an example. Let $X=\{1<3<4<5<6\}$ be given. Then the elements of $X$ are ordered increasingly. To get $X$ in increasing oe-order $\{1<4<3<6<5\}$ we have to do two transpositions. Thus this reordering is achieved by an even permutation and, hence, $\oesgn{X} = 1$. The parity splitting of $X$ is $(\{2<3\},\{1<2<3\})$, and since $\{2<3\}$ and $\{1<2<3\}$ have different cardinality we see that $X$ is not parity balanced.

We will need the following property of the oe-order.

\begin{lemma}\label{Vorzeichenlemma}
For $\nu \in\mathbb{N}$ consider the ordered set $X=\{1<2<\cdots< 2\nu\}$. Let $M$ be a parity balanced ordered subset of $X$ and let $N=\overline{M}$. If $(N_1,N_2)$ is the parity splitting of $N$ then
\begin{equation}
\oesgn{M}\oesgn{N} = (-1)^{\sum_{m\in N_1} m + \sum_{n\in N_2} n}.
\end{equation}
\end{lemma}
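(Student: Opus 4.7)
My plan is to exploit the fact that the natural order of $X=\{1,2,\ldots,2\nu\}$ already coincides with its increasing oe-order, and to compute the sign of the permutation from natural order to the concatenation
\[
\gamma := (\text{oe-ordering of }M,\ \text{oe-ordering of }N)
\]
in two different ways. Setting $k=|M|/2$, I would first argue for route (i), the direct passage $(1,\ldots,2\nu)\to \gamma$, that this permutation preserves the parity of positions, since in both sequences the entries at odd positions are odd and those at even positions are even. Hence it decomposes into independent rearrangements of the odd and of the even positions. Relabelling each odd element $2j-1\in X$ by $j$, the odd-position piece becomes the unshuffle that brings $M_2\subseteq\{1,\ldots,\nu\}$ to the front; by the classical formula for the sign of such an unshuffle, its sign equals $(-1)^{\sum M_2-k(k+1)/2}$. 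An analogous argument on the even positions gives $(-1)^{\sum M_1-k(k+1)/2}$, so (since $k(k+1)$ is even) the total sign along route (i) is $(-1)^{\sum M_1+\sum M_2}$.

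Route (ii) passes through the intermediate sequence $(\mathrm{nat}(M),\mathrm{nat}(N))$: first unshuffle $M$ to the front of $X$ by a permutation of sign $(-1)^{\sum_{m\in M}m-k(2k+1)}$, then apply $\oesgn{M}$ and $\oesgn{N}$ within the respective blocks. Here the parity-balanced hypothesis enters essentially: since $M$ contains equally many odd and even elements, $\sum_{m\in M}m=2(\sum M_1+\sum M_2)-k$, so the exponent in question is $2(\sum M_1+\sum M_2)-2k(k+1)$, which is even. Thus the initial unshuffle is itself an even permutation, and the total sign along route (ii) equals $\oesgn{M}\oesgn{N}$.

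Equating the signs from the two routes yields $\oesgn{M}\oesgn{N}=(-1)^{\sum M_1+\sum M_2}$. To bring this into the form stated in the lemma, I would finally use the complementarity $M_1\sqcup N_1=M_2\sqcup N_2=\{1,2,\ldots,\nu\}$, which gives $(\sum M_1+\sum M_2)+(\sum N_1+\sum N_2)=\nu(\nu+1)\equiv 0\pmod 2$, so $(-1)^{\sum M_1+\sum M_2}=(-1)^{\sum N_1+\sum N_2}$.

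The main technical subtlety is the verification that the initial unshuffle in route (ii) is always even; this is where the parity-balanced hypothesis $|M_1|=|M_2|$ is really consumed, via the congruence $\sum_{m\in M}m\equiv k\pmod 2$. Once this point is in place, the two routes must yield identical signs and the remainder reduces to straightforward bookkeeping modulo $2$.
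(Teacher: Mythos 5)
Your argument is correct, but it proceeds by a genuinely different route than the paper. The paper proves the lemma by induction on $a=\sum_{m\in N_1}m+\sum_{n\in N_2}n$: the base case is $N_1=N_2=\{1,\ldots,c\}$, and the induction step exchanges an element $x\in N$ with $x-2\in M$, checking that each such exchange flips exactly one of $\oesgn{M}$, $\oesgn{N}$ while decreasing $a$ by one. You instead compute the sign of the single permutation carrying the natural order of $X$ to the concatenation $(\mathrm{oe}(M),\mathrm{oe}(N))$ in two ways: once by splitting into odd and even positions and using the classical unshuffle-sign formula (giving $(-1)^{\sum M_1+\sum M_2}$ after the $-k(k+1)$ terms cancel), and once through the intermediate sequence $(\mathrm{nat}(M),\mathrm{nat}(N))$, where the key point — and the place where parity balance is genuinely consumed — is that $\sum_{m\in M}m\equiv k\pmod 2$ makes the initial unshuffle even, so this route yields $\oesgn{M}\oesgn{N}$. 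The final reduction via $M_1\sqcup N_1=M_2\sqcup N_2=\{1,\ldots,\nu\}$ and $\nu(\nu+1)$ even is also correct. Your approach is non-inductive and closed-form; it isolates exactly where the hypothesis $|M_1|=|M_2|$ enters and delivers the equivalent identity $\oesgn{M}\oesgn{N}=(-1)^{\sum M_1+\sum M_2}$ as a byproduct, whereas the paper's swap-induction is shorter to write down and needs no inversion-counting formulas. Both are complete proofs of the statement.
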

\begin{proof}
Fix the cardinality $c=|N_1|=|N_2|$ and let $a = \sum_{m\in N_1} m + \sum_{n \in N_2} n$. Note that $a \ge c(c+1)$. We prove the lemma by induction on $a$.

For the induction start observe that if $a=c(c+1)$ then $N_1=N_2=\{1,\ldots,c\}$  and, hence, $\oesgn{M}=\oesgn{N}=1$ (the oe-order and the natural order coincide) and $(-1)^a=1$. Thus the lemma holds in this instance.

For the induction step assume that the lemma is true for all $c(c+1)\le a < a_0$ and consider a constellation $N_1,N_2$ corresponding to the value $a=a_0$. Since $a_0>c(c+1)$ there is $x\in N$ with $x-2\in M$. Now we switch these two elements, {\it i.e.}, we
put $x$ in the set $M$ and $x-2$ in the set $N$. If $x-1 \in N$ then $\oesgn{M}$ remains the same under this change, and $\oesgn{N}$ changes its sign. If $x-1 \in M$ then $\oesgn{N}$ remains the same under this change, and $\oesgn{M}$ changes its sign. Thus in any instance, $\oesgn{M}\oesgn{N}$ changes the sign.

Summing up, in the new constellation we have $a=a_0-1$ and a total sign change of $(-1)$ in $\oesgn{M}\oesgn{N}$ compared to the original constellation. Thus the lemma follows by induction.
\end{proof}

\section{Formul\ae{} for the volumes in the general case}\label{sec:vol}
In this section we treat the case of polynomials having $s$ pairs of
(nonreal) complex conjugate roots and $r$ real roots within the open unit
disk. We will first derive a general formula that expresses the
volume $v_{2s+r}^{(s)}$ of the coefficient space in this instance by means of a sum
of determinants. This forms a generalization of
Lemma~\ref{totalcomplexlemma}.

Using the notations introduced in \eqref{eq:not21} and
\eqref{eq:not22} the result of \cite[Theorem 2.1]{API} states that
the volumes in question admit the integral representation
\begin{equation}
\begin{split}
v_{2s+r}^{(s)}=& \frac 1 {s!r!}\int_{D_{r,s}}\prod_{1\le j<k\le s}
(z_j-z_k)(z_j-\overline{z_k})(\overline{z_j}-z_k)(\overline{z_j}-\overline{z_k})
\\ &
\!\!\cdot\prod_{1 \le j \le s, 1\le k \le r}(z_j-\xi_k)
(\overline{z_j}-\xi_k)\prod_{1\le j<k \le
r}|\xi_k-\xi_r|\:d\alpha_1\cdots\:d\alpha_s\:d\beta_1\cdots
\:d\beta_s\:d\xi_1\cdots\:d\xi_r,
\end{split}
\end{equation}
where $D_{r,s}$ denotes the region given by $0\le \beta_j\le 1,
-2\sqrt{\beta_j}\le \alpha_j\le 2\sqrt{\beta_j}$ for all $1\le j\le
s$ and $-1\le \xi_j\le 1$ for all $1\le j \le r.$ Proceeding as in
\eqref{eq:postvdm} with the ``complex part'' of the integral and
bringing the variables $\xi_1,\ldots,\xi_r$ in increasing order
yields
\begin{equation}\label{eq:1stV}
v_{2s+r}^{(s)}= \frac {2^s i^s} {s!}\int\limits_{\substack{(x_j,y_j)\in B^+ \\ 1\leq j\leq s}}\int_{-1\le\xi_1\le\cdots\le\xi_r\le 1}  V_{2s,r}
      \:d x_1\cdots\:d x_s\:d y_1\cdots \:d y_s\:d\xi_1\cdots\:d\xi_r,
\end{equation}
where
\begin{equation}\label{eq:4vdm}
\begin{split}
V_{2s,r}:=V(z_1,\overline {z_1},\dots,
z_s,\overline{z_s},\xi_1,\dots,\xi_r)
\end{split}
\end{equation}
is the Vandermonde determinant of order $2s+r$. Laplace expansion of
this determinant by the first $2s$  columns (which are the columns
corresponding to the ``complex'' part) yields
\begin{equation}\label{eq:vdmLap}
\begin{split}
V_{2s,r}=\sum_{\begin{subarray}{c}M\subseteq \{1,\dots,2s+r\}\\ |M|= 2s\end{subarray}}
V_{2s,r}^{M,\{1,\dots,2s\}}V_{2s,r}^{N,\{2s+1,\dots,2s+r\}}(-1)^{\sum_{j\in
M} j +\binom {2s+1}2},
\end{split}
\end{equation}
where the sum runs over finite ordered sets $M,$ the ordered set $N$ is defined as the complement $N=\{1,\dots,2s+r\}\setminus M$, and $V_{2s,r}^{I,J}$ is the minor of the Vandermonde matrix \eqref{eq:4vdm} with the rows having index in $I$ and the columns having index in $J$. Inserting
\eqref{eq:vdmLap} in \eqref{eq:1stV} we obtain
\begin{equation}\label{v2s+rs}
\begin{split}
v_{2s+r}^{(s)}= \frac {2^s i^s} {s!}\sum_{M}(-1)^{\sum_{j\in M} j +\binom {2s+1}2}&\int\limits_{\substack{(x_j,y_j)\in B^+ \\ 1\leq j\leq s}}V_{2s,r}^{M,\{1,\dots,2s\}}\:d x_j \:d y_j\\
&\cdot\int_{-1\le\xi_1\le\cdots\le\xi_r\le
1}V_{2s,r}^{N,\{2s+1,\dots,2s+r\}}\:d \xi_1\cdots\:d\xi_r.
\end{split}
\end{equation}
Proceeding with the first integral as in \eqref{eq:2odd1}, \eqref{eq:2odd2}, and \eqref{eq:2odd3} we see that this integral equals zero if $M$ is not parity balanced in the sense of Definition~\ref{def:oe}. Furthermore, letting $(M_1,M_2)$ be the parity splitting of $M$ (see again Definition~\ref{def:oe}), with the same arguments as in the instance $r=0$ in Section~\ref{sec:cx} (for the definition of $\oesgn{M}$ see also Definition~\ref{def:oe}) for parity balanced $M$ the first integral in \eqref{v2s+rs} satisfies 
\begin{equation}
\begin{split}
\frac {2^s i^s} {s!}&\int\limits_{\substack{(x_j,y_j)\in B^+ \\ 1\leq j\leq s}}V_{2s,r}^{M,\{1,\dots,2s\}}\:d x_j \:d y_j = 2^{3s}\oesgn{M}
D_s^{M_1,M_2},
\end{split}
\end{equation}
where $D_s^{M_1,M_2}$ is the minor of the determinant $D_s$ from
\eqref{Dd}, {\it i.e.},
\begin{equation}\label{Ddminors}
\begin{split}
  D_s^{M_1,M_2}:= \det \left(\frac 1{(2j)^2-(2k-1)^2}\right)_{j\in M_1, k\in M_2}.
\end{split}
\end{equation}
Observing that
\begin{equation*}
\begin{split}
(-1)^{\sum_{j\in M} j +\binom {2s+1}2} = (-1)^{|M_2|+(2s+1)s} = 1
\end{split}
\end{equation*}
since $|M_2| = s$, we have
\begin{equation}\label{mixedvolume1}
\begin{split}
v_{2s+r}^{(s)}= 2^{3s}\sum_{\begin{subarray}{c}M\subseteq
\{1,\dots,2s+r\}, |M|= 2s \\ M \hbox{ \small parity
balanced}\end{subarray}}\oesgn{M} D_s^{M_1,M_2}
&\int_{-1\le\xi_1\le\cdots\le\xi_r\le
1}V_{2s,r}^{N,\{2s+1,\dots,2s+r\}}\:d \xi_1\cdots\:d\xi_r,
\end{split}
\end{equation}
where $(M_1,M_2)$ is the parity splitting of $M$, and
$N=\{1,\dots,2s+r\}\setminus M$ (observe that $N$ consists of $\lfloor
r/2\rfloor$ even and $\lceil r/2\rceil $ odd numbers).

Now we turn our attention to the ``real'' integral. For the finite ordered set $N=\{n_1<\cdots<n_r\}$ we have
\begin{equation}\label{Ir1}
\begin{split}
I_r(N): &=\int_{-1\le\xi_1\le\cdots\le\xi_r\le 1}V_{2s,r}^{N,\{2s+1,\dots,2s+r\}}\:d \xi_1\cdots\:d\xi_r \\
&= \int_{-1\le\xi_1\le\cdots\le\xi_r\le 1}V(\xi_1,\dots,\xi_r;N)\:d \xi_1\cdots\:d\xi_r,
\end{split}
\end{equation}
where $V(\xi_1,\dots,\xi_r;N)$ is given by
\begin{equation}\label{PartialVandermonde}
\begin{split}
V(\xi_1,\dots,\xi_r;N):=\det \big(\xi_k^{n_j-1}\big)_{1\le j,k\le r}.
\end{split}
\end{equation}
These determinants are strongly related to the well-known {\em Schur functions}, {\it viz.}, $V(\xi_1,\dots,\xi_r;N)$ is just a product of a Schur function and a Vandermonde determinant  (see~\cite[p.~40, equation~(3.1)]{Macdonald:95}).
Let
\begin{equation}\label{N'}
\begin{split}
N'=\{n_1',\ldots,n_r'\}
\end{split}
\end{equation}
be the ordered set containing the arrangement of the elements of $N$ in increasing oe-order. Then \eqref{mixedvolume1} and \eqref{Ir1} imply the following result.

\begin{lemma}\label{lem:real}
The volume $v_{2s+r}^{(s)}$ satisfies
\begin{equation}\label{real}
\begin{split}
v_{2s+r}^{(s)}= 2^{3s}\sum_{\begin{subarray}{c}M\subseteq
\{1,\dots,2s+r\}, |M|= 2s \\ M \hbox{ \small parity
balanced}\end{subarray}} \oesgn{M}\oesgn{N} D_s^{M_1,M_2}
I_r(N').
\end{split}
\end{equation}
Here $N=\{1,\dots,2s+r\}\setminus M$, and $N'$ is a finite ordered set containing the
elements of $N$ in increasing oe-order (see Definition~\ref{def:oe}
for the terminology).
\end{lemma}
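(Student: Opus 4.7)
The plan is to start from equation~\eqref{mixedvolume1}, which has already been derived in the text preceding the lemma. In that formula the ``real'' contribution is an integral of the form $I_r(N)$ with $N=\{n_1<\cdots<n_r\}$ understood as a finite ordered set in its natural ascending order, so the entire content of the lemma is to rewrite $I_r(N)$ in terms of $I_r(N')$ and to book-keep the resulting sign.

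By \eqref{Ir1} and \eqref{PartialVandermonde} one has $I_r(N)=\int V(\xi_1,\dots,\xi_r;N)\,d\xi_1\cdots d\xi_r$ with $V(\xi_1,\dots,\xi_r;N)=\det(\xi_k^{n_j-1})_{1\le j,k\le r}$, whose rows are indexed by the elements of $N$ in ascending order. By Definition~\ref{def:oe}, $\oesgn{N}$ is the sign of the permutation $\tau\in\mathfrak{S}_r$ that sends the ascending enumeration of $N$ to the oe-ordered enumeration $(n_1',\dots,n_r')$ appearing in \eqref{N'}. Since permuting the rows of a determinant by $\tau$ multiplies it by $\sgn{\tau}=\oesgn{N}$, this yields $V(\xi_1,\dots,\xi_r;N')=\oesgn{N}\,V(\xi_1,\dots,\xi_r;N)$. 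Using $\oesgn{N}^2=1$ and integrating over the common simplex $-1\le\xi_1\le\cdots\le\xi_r\le 1$ gives $I_r(N)=\oesgn{N}\,I_r(N')$.

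Substituting this identity into \eqref{mixedvolume1} and pulling the scalar $\oesgn{N}$ (which depends only on $M$, since $N=\{1,\dots,2s+r\}\setminus M$) inside the sum produces exactly the identity asserted in Lemma~\ref{lem:real}. The argument is therefore a purely bookkeeping step and carries no real obstacle; the only subtlety is to get the direction of the reordering permutation correct, which is immaterial because signs are invariant under inversion in $\mathfrak{S}_r$. The purpose of this reformulation is to isolate $I_r(N')$, a quantity indexed by oe-ordered data, as the natural object to analyze in the subsequent sections where the ``real'' contribution is to be expanded in combination with Corollary~\ref{ConvolutionSdlemma}.
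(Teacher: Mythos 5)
Your argument is correct and coincides with what the paper does: Lemma~\ref{lem:real} is obtained from \eqref{mixedvolume1} precisely by noting that passing from $V(\xi_1,\dots,\xi_r;N)$ to $V(\xi_1,\dots,\xi_r;N')$ permutes rows and hence contributes the sign $\oesgn{N}$, so $I_r(N)=\oesgn{N}\,I_r(N')$. Your bookkeeping of this sign, including the remark that the direction of the reordering permutation is immaterial, matches the paper's (implicit) proof exactly.
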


It now remains to compute the integrals
\begin{equation}\label{IR}
\begin{split}
I_r(N') = \sum_{k=0}^r \int_{-1\le\xi_1\le\cdots\le\xi_k\le 0}\int_{0\le\xi_{k+1}\le\dots\le\xi_r\le 1} V(\xi_1,\dots,\xi_r;N')\:d \xi_1\cdots\:d\xi_r.
\end{split}
\end{equation}
By the tight relation of $V(\xi_1,\dots,\xi_r;N')$ to Schur functions mentioned after \eqref{PartialVandermonde}  we see that these integrals resemble the Selberg-type integrals studied in \cite[Section~2]{FW:08}. Indeed,  the {\em Jack polynomials} (also called {\em Jack's symmetric functions}) used there are generalizations of Schur functions (see~\cite[p.~379]{Macdonald:95}). The difference is that in our instance we only have skew symmetric rather than symmetric integrands in \eqref{IR}. Let us fix $k$ for the moment and expand the determinant by its first $k$ columns. Then the integrals in the
$k$th summand of \eqref{IR} can be rewritten as
\begin{equation}\label{realpart}
\begin{split}
\int_{-1\le\xi_1\le\cdots\le\xi_k\le 0}&\int_{0\le\xi_{k+1}\le\cdots\le\xi_r\le 1} V(\xi_1,\dots,\xi_r;N')\:d \xi_1\cdots\:d\xi_r \\
&\!\!\!\!\!\!\!\!\!\!\!\!\!\!\!\!\!\!\!\!\!\!\!\!\!\!\!\!\!=\sum_{K'=\{n_{i_1}',\dots, n_{i_k}'\}\subseteq N'} (-1)^{\sum_{1\le j\le k} i_j +\binom {k+1}2}\int_{-1\le\xi_1\le\cdots\le\xi_k\le 0}V(\xi_1,\dots,\xi_k;K')\:d \xi_1\cdots\:d\xi_k \\
&\cdot \int_{0\le\xi_{k+1}\le\cdots\le\xi_r\le 1}
V(\xi_{k+1},\dots,\xi_r;N'\setminus K')\:d
\xi_{k+1}\cdots\:d\xi_r.
\end{split}
\end{equation}
Substituting $\eta_i=-\xi_i$ for $1\le i\le k$ in the first integral in \eqref{realpart} yields
\begin{equation}\label{eq:iilast}
\begin{split}
&\!\!\!\!\!\!\!\!\!\!\!\!\!\!\!\!\int_{-1\le\xi_1\le\cdots\le\xi_k\le 0}V(\xi_1,\dots,\xi_k;K')\:d \xi_1\cdots\:d\xi_k
\\
 &=\int_{1\ge\eta_1\ge\cdots\ge\eta_k\ge 0}(-1)^{\sum_{1\le j\le k} (n_{i_j}'-1) -k }V(\eta_1,\dots,\eta_k;K')(-1)^k\:d \eta_1\cdots\:d\eta_k\\
  &=\int_{0\le\eta_k\le\cdots\le\eta_1\le 1}(-1)^{k+\sum_{1\le j\le k} (n_{i_j}'-1) +\binom {k}2}V(\eta_1,\dots,\eta_k;K')(-1)^k\:d \eta_1\cdots\:d\eta_k\\
  &=(-1)^{\sum_{1\le j\le k} n_{i_j}'+\binom {k+1}2} J_k(K'),
\end{split}
\end{equation}
where for a finite ordered set $W=\{w_1,\ldots, w_m\}$ we set
\begin{equation}\label{Sm}
\begin{split}
J_m(W):= \int_{0\le u_1\le\cdots\le u_m\le 1} V(u_1,\dots, u_m;W)\:d u_1\cdots\:d u_m.
\end{split}
\end{equation}
Using this notation, inserting \eqref{eq:iilast} into
\eqref{realpart} and then \eqref{realpart} into \eqref{IR}, and
observing that, because $n_1',\ldots, n_r'$ are in increasing
oe-order the numbers $n_{i_j}'+ i_j$ are all even, we finally get
\begin{equation}\label{Ir}
\begin{split}
I_r(N')= \sum_{K' \subseteq N'} J_{|K'|}(K') J_{r-|K'|}(N'\setminus K').
\end{split}
\end{equation}

In the next step we replace the integrals $J_m(W)$ by the sum $S_m(W)$ from \eqref{newSm}. This is justified by the following lemma.

\begin{lemma}\label{lem:JS}
For each $m\in \mathbb{N}$ and each ordered set $W=\{w_1 < \cdots < w_m\}$ of complex numbers with positive real part we have
\[
J_m(W)=S_m(W),
\]
where $S_m(W)$ and $J_m(W)$ are defined in \eqref{newSm} and \eqref{Sm}, respectively.
\end{lemma}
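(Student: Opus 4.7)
The plan is to reduce the integral $J_m(W)$ to an explicit sum over permutations by expanding the determinant in $V(u_1,\ldots,u_m;W)$ and then evaluating the resulting iterated integrals one variable at a time, from $u_1$ outward. Specifically, I would start by writing
\[
V(u_1,\ldots,u_m;W)=\sum_{\sigma\in\mathfrak{S}_m}\sgn{\sigma}\prod_{k=1}^m u_k^{w_{\sigma(k)}-1},
\]
interchange the finite sum with the integral over the compact simplex $0\le u_1\le\cdots\le u_m\le 1$, and observe that because each $w_j$ has positive real part the integrand $u_k^{w_{\sigma(k)}-1}$ is (absolutely) integrable.

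Next I would compute, for fixed $\sigma\in\mathfrak{S}_m$, the iterated integral
\[
I_\sigma:=\int_{0\le u_1\le\cdots\le u_m\le 1}\prod_{k=1}^m u_k^{w_{\sigma(k)}-1}\,du_1\cdots du_m
\]
by induction on the innermost variable. The innermost integration gives $\int_0^{u_2}u_1^{w_{\sigma(1)}-1}\,du_1=u_2^{w_{\sigma(1)}}/w_{\sigma(1)}$, which is legitimate since $\Re(w_{\sigma(1)})>0$. Combining this with the factor $u_2^{w_{\sigma(2)}-1}$ and integrating over $u_2$ produces $u_3^{w_{\sigma(1)}+w_{\sigma(2)}}/[w_{\sigma(1)}(w_{\sigma(1)}+w_{\sigma(2)})]$, and a straightforward induction on the integration level shows that
\[
I_\sigma=\frac{1}{w_{\sigma(1)}(w_{\sigma(1)}+w_{\sigma(2)})\cdots(w_{\sigma(1)}+\cdots+w_{\sigma(m)})}.
\]
Note that all partial sums in the denominators are nonzero (in fact have positive real part) by the hypothesis on $W$, so the inductive step is always valid.

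Summing $\sgn{\sigma}\,I_\sigma$ over $\sigma\in\mathfrak{S}_m$ then yields exactly the right-hand side of \eqref{newSm}, proving $J_m(W)=S_m(W)$. The argument is essentially a routine iterated integration; there is no real obstacle beyond bookkeeping. The only point worth emphasising is the role of the assumption $\Re(w_j)>0$, which guarantees both the convergence of each elementary antiderivative $\int_0^x u^{w-1}\,du=x^w/w$ and the nonvanishing of the partial sums that appear in the denominators of $S_m(W)$.
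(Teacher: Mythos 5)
Your proof is correct and is essentially the paper's own argument: the paper performs the substitution $u_k = v_k v_{k+1}\cdots v_m$ to turn the simplex into a cube and then expands the determinant, which is just another way of organising the iterated integration you carry out directly, and both routes produce the same product of reciprocals of the partial sums $w_{\sigma(1)}+\cdots+w_{\sigma(k)}$. Your explicit remark on the role of $\Re(w_j)>0$ (convergence of $\int_0^x u^{w-1}\,du$ and nonvanishing of the denominators) matches the hypothesis under which the lemma is stated.
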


\begin{proof}
Recall that
\begin{equation*}
\begin{split}
J_m(W)= \int_{0\le u_1\le\cdots\le u_m\le 1}
\det(u^{w_j-1}_k)_{1\le j,k\le r }\:d u_1\cdots\:d u_m.
\end{split}
\end{equation*}
With the substitution $u_k:=v_k v_{k+1}\cdots v_m,\ 1\le k\le m,$ the
integral turns to
\begin{align}\label{BoesesIntegral}
\begin{aligned}
J_m&(W)= \int_{0\le v_1,v_2,\dots,v_m\le 1}
\det(v^{w_j-1}_k\cdots v^{w_j-1}_m)_{1\le j,k\le m }v^1_2 v^2_3\cdots
v^{m-1}_m\:d v_1\cdots\:d v_m\\
&=\sum_{\sigma\in \mathfrak{S}_m} \sgn{\sigma} \int_{0\le
v_1,v_2,\dots,v_m\le
1}v_1^{w_{\sigma(1)}-1}v_2^{w_{\sigma(1)}+w_{\sigma(2)}-1}\cdots
v_m^{w_{\sigma(1)}+\cdots+w_{\sigma(m)}-1}\:d v_1\cdots\:d v_m\\
&=\sum_{\sigma\in \mathfrak{S}_m} \sgn{\sigma} \frac
1{w_{\sigma(1)}(w_{\sigma(1)}+w_{\sigma(2)})\cdots
(w_{\sigma(1)}+\cdots+w_{\sigma(m)})}\\
&= S_m(W). \qedhere
\end{aligned}
\end{align}
\end{proof}

For the following considerations we have to make some amendments in case $|N|$ is odd. In particular, we define
\begin{equation}\label{2tequation}
2t= \begin{cases} r &\text{if $r$ is even},\\
    r+1 &\text{if $r$ is odd}
    \end{cases}
\end{equation}
and set
\begin{equation}\label{tildeN1}
\widetilde {N}= \begin{cases} N &\text{if $r$ is even},\\
    N\cup \{2(s+t)\} &\text{if $r$ is odd}.
    \end{cases}
\end{equation}

We now apply Corollary~\ref{ConvolutionSdlemma} in combination with the last lemma to the evaluation of the sums $\sum_{K'} J_{|K'|}(K')J_{r-|K'|}(N'\setminus K')$ occurring in \eqref{Ir}.
\begin{lemma}\label{Reelllemma}
Let $(\widetilde{N}_1,N_2)$ be the parity splitting of $\widetilde N$. Then the ``real'' integrals $I_r(N')$ with $N'$ as in \eqref{N'} satisfy
\begin{equation}\label{eq:JJJ}
\begin{split}
I_r(N')
= 2^r B^{\widetilde{N}_1,N_2},
 \end{split}
\end{equation}
 where $B^{\widetilde{N}_1,N_2}$ is the minor with row indices from $\widetilde{N}_1$ and column indices from  $N_2$ of the matrix
\begin{equation*}
\begin{split}
 B=& \begin{cases}
 \left(\frac 1{(2k-1)(2j+2k-1)}\right)_{1\le j,k\le s+t} &\text{if } r=2t \text{ even},\\
\left(\begin{matrix}\frac 1{(2k-1)(2j+2k-1)},& 1\le j \le s+t-1\\
\frac 1{2k-1},& j=s+t \end{matrix}\right)_{1\le j,k\le s+t}
&\text{if $r=2t-1$ odd.} \end{cases}
\end{split}
\end{equation*}
\end{lemma}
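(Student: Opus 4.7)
My plan is to combine \eqref{Ir} with Lemma~\ref{lem:JS} to rewrite $I_r(N')$ as a convolution of $S$-sums, invoke Corollary~\ref{ConvolutionSdlemma} to obtain a closed-form product, and match the resulting expression against the minor $B^{\widetilde N_1,N_2}$ evaluated through the Cauchy double alternant formula \eqref{doublealternant}.

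First, substituting $J_m = S_m$ from Lemma~\ref{lem:JS} into \eqref{Ir} gives
\[
I_r(N') = \sum_{K'\subseteq N'} S_{|K'|}(K')\, S_{r-|K'|}(N'\setminus K').
\]
Since Corollary~\ref{ConvolutionSdlemma} depends only on the ordered labelling of its indeterminates (not on their numerical order), I would apply it with $(Y_1,\ldots,Y_r)=(n_1',\ldots,n_r')$, producing a closed product in the $n_j'$. Writing $\widetilde N_1=\{\alpha_1<\cdots\}$ and $N_2=\{\beta_1<\cdots\}$, Definition~\ref{def:oe} gives $n_{2i-1}'=2\beta_i-1$ and $n_{2i}'=2\alpha_i$ (with $\alpha_t=s+t$ the extra entry of $\widetilde N_1$ in the odd case). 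Substituting, the same-parity differences yield, up to a power of $2$, the products $\prod_{i<i'}(\beta_{i'}-\beta_i)\prod_{i<i'}(\alpha_{i'}-\alpha_i)$; the odd-position reciprocals yield $\prod_k(2\beta_k-1)^{-1}$; and, after splitting the mixed-parity range $j<k$ into the odd--even and even--odd cases, the mixed-parity reciprocals telescope to the Cartesian-type product $\prod_{j,k}(2\alpha_j+2\beta_k-1)^{-1}$ with the index ranges dictated by the parity of $r$.

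In parallel I would evaluate $B^{\widetilde N_1,N_2}$. Factoring $(2\beta_k-1)^{-1}$ out of each column reduces the even case $r=2t$ directly to the Cauchy double alternant with $X_j=2\alpha_j$ and $Y_k=2\beta_k-1$, and \eqref{doublealternant} reproduces precisely the product obtained above. In the odd case $r=2t-1$ the reduced matrix has a last row of ones; I would evaluate its determinant as the $\alpha_t\to\infty$ limit of $2\alpha_t\det\bigl(1/(2\alpha_j+2\beta_k-1)\bigr)$, whose leading-order asymptotic via the Cauchy formula once again matches the expression from the convolution and yields the required prefactor $2^{(t-1)^2}$.

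The main obstacle will be the careful bookkeeping in the decoding step: verifying that the mixed-parity factors really assemble into the claimed Cartesian product over $\widetilde N_1\times N_2$ with the correct index ranges, and that the powers of $2$ arising from Corollary~\ref{ConvolutionSdlemma} exactly cancel those produced by the Cauchy formula (and, in the odd case, by the limit). Once these verifications are completed, comparing the two closed forms yields $I_r(N')=2^r B^{\widetilde N_1,N_2}$, as claimed.
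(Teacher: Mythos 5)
Your proposal is correct and follows essentially the same route as the paper: combine \eqref{Ir} with Lemma~\ref{lem:JS} and Corollary~\ref{ConvolutionSdlemma} (applied with $Y_j=n_j'$) to obtain the closed product form, then identify that product with $2^r$ times the minor $B^{\widetilde{N}_1,N_2}$ by factoring $(2k-1)^{-1}$ out of the columns and invoking the Cauchy double alternant \eqref{doublealternant}. The only deviation is in the odd case $r=2t-1$, where you derive the determinant with a last row of ones by an $\alpha_t\to\infty$ limit of the ordinary Cauchy formula rather than by the paper's explicit variant \eqref{odddoublealternant}; this is an equally valid justification of the same identity, and your predicted power $2^{(t-1)^2}$ is indeed what the parity bookkeeping produces on both sides.
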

\begin{proof}
To prove the result for the instance $r=2t$ we use Corollary~\ref{ConvolutionSdlemma} with the choice $Y_j=n_j'$, where $n_j'$ is as in  \eqref{N'}. Combining this corollary with Lemma~\ref{lem:JS} we obtain
\begin{equation*}
I_r(N')= 2^r
\prod_{\begin{subarray}{c}1\le j < k \le r \\ j-k{\small \rm \; even}\end{subarray}} (n'_{k}-n'_{j})
\prod_{\begin{subarray}{c}j=1 \\ j{\small \rm \; odd}\end{subarray}}^r\frac1{n'_j}
\prod_{\begin{subarray}{c}1\le j < k \le r \\ j-k{\small \rm \; odd}\end{subarray}} \frac{1}{n'_{k}+n'
_{j}}.
\end{equation*}
Using the double alternant formula \eqref{doublealternant} this yields
\begin{equation*}
I_r(N') = 2^r\det\left(\frac{1}{n'_{2j}-n'_{2k-1}}\right)_{1\le j,k\le t}\;\prod\limits_{\begin{subarray}{c}k=1 \\ k{\small \rm \; odd}\end{subarray}}^r\ \frac1{n'_{k} }
\\
=
2^r\det\left(\frac{1}{n'_{2k-1}(n'_{2j}-n'_{2k-1})}\right)_{1\le j,k\le t}
\end{equation*}
and the result follows because the last determinant equals $B^{\widetilde{N}_1,N_2}$.

In the instance $r=2t-1$ the result follows by the same reasoning. The only difference is that instead of using the double alternant formula \eqref{doublealternant} we have to use \begin{equation}\label{odddoublealternant}
\det \left(\begin{matrix} \frac 1{X_j+Y_k}, &1\le j\le n-1\\
1, &j=n \end{matrix} \right)_{1\le j,k \le n}= \frac {\prod\limits_{1\le
j<k\le n-1}(X_j-X_k)\prod\limits_{1\le j <k\le n}(Y_j-Y_k)}{\prod\limits_{1\le
j\le n-1, 1\le k \le n}(X_j+Y_k)}.
\end{equation}
This identity can in turn be proved along the lines of the usual proof of \eqref{doublealternant}.
\end{proof}

Now we are ready to prove our first main result on the ``mixed'' volumes.

\begin{theorem}\label{Detmix}
Let $v_{d}^{(s)}$ denote the volume of the coefficient space of real
polynomials of degree $d=2s+r$ with $s$ pairs of (nonreal) complex
conjugate roots and $r$ real roots in the open unit disk. Then, for
$s\le n,$
\begin{equation*}
\frac {v_{2n}^{(s)}}{2^{2n+s}}=\sum_{\begin{subarray}{c} J\subseteq
\{1,\dots,n\}\\ |J|=s \end{subarray}}\det \big(c_{j,k}(J)\big)_{1\le j,k\le
n}=\sum_{\begin{subarray}{c} J\subseteq \{1,\dots,n\}\\ |J|=s
\end{subarray}}\det \big({c}'_{j,k}(J)\big)_{1\le j,k\le n}
\end{equation*}
with
\[
c_{j,k}(J)=\begin{cases}\displaystyle \frac{1}{(2j)^2-(2k-1)^2}
& \text{for $j\in J$}, \\[3mm]
\displaystyle \frac{1}{(2k-1)(2j+2k-1)}
& \text{for $j\not\in J$},
\end{cases}
\quad
c'_{j,k}(J)=\begin{cases}\displaystyle \frac{1}{(2j)^2-(2k-1)^2}
& \text{for $k\in J$}, \\[3mm]
\displaystyle \frac{1}{(2k-1)(2j+2k-1)}
& \text{for $k\not\in J$},
\end{cases}
\]
and, for $s\le n-1,$
\begin{equation*}
\frac {v_{2n-1}^{(s)}}{2^{2n+s-1}}=\sum_{\begin{subarray}{c}
J\subseteq \{1,\dots,n-1\}\\ |J|=s \end{subarray}}\det
\big(\tilde c_{j,k}(J)\big)_{1\le j,k\le n}
\end{equation*}
with
\begin{equation*}
\tilde c_{j,k}(J) = \begin{cases}\displaystyle \frac 1{(2j)^2-(2k-1)^2} &\text{ for } j\in J, \\[3mm]
\displaystyle \frac 1{(2k-1)(2j+2k-1)} &\text{ for } j\notin J,\; j\neq n,\\[3mm]
\displaystyle \frac 1{2k-1} &\text{ for } j=n.
 \end{cases}
\end{equation*}
\end{theorem}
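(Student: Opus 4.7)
The plan is to combine Lemma~\ref{lem:real} and Lemma~\ref{Reelllemma} and then recognize the resulting double sum as the Laplace expansion of the claimed $n\times n$ determinants.

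First I would substitute $I_r(N') = 2^r B^{\widetilde{N}_1,N_2}$ from Lemma~\ref{Reelllemma} into the formula of Lemma~\ref{lem:real}. In the even case $d=2n$ we have $r=2n-2s$ (so $2t=r$, $s+t=n$, and $\widetilde N=N$), giving
\[
\frac{v_{2n}^{(s)}}{2^{2n+s}} \,=\, \sum_{\substack{M\subseteq\{1,\dots,2n\},\,|M|=2s\\ M\text{ parity balanced}}} \oesgn{M}\,\oesgn{N}\; D_s^{M_1,M_2}\,B^{N_1,N_2}.
\]
The key observation is that the parity splitting sets up a bijection
\[
\{M\subseteq\{1,\dots,2n\}:|M|=2s,\;M\text{ parity balanced}\}\;\longleftrightarrow\;\{(M_1,M_2):M_1,M_2\subseteq\{1,\dots,n\},\;|M_1|=|M_2|=s\},
\]
and under this bijection $N_1=\{1,\dots,n\}\setminus M_1$, $N_2=\{1,\dots,n\}\setminus M_2$.

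Next I would set $J=M_1$ and apply Laplace expansion along the rows $J$ to the $n\times n$ determinant $\det(c_{j,k}(J))_{1\le j,k\le n}$: for each choice of $s$ columns $K\subseteq\{1,\dots,n\}$, the rows in $J$ contribute the minor $D_s^{J,K}$ and the remaining rows contribute exactly $B^{\bar J,\bar K}$, where $\bar J,\bar K$ are the complements in $\{1,\dots,n\}$. The Laplace sign is $(-1)^{\sum J+\sum K}$. Summing over $J$ therefore yields
\[
\sum_{\substack{J\subseteq\{1,\dots,n\}\\|J|=s}}\det\!\bigl(c_{j,k}(J)\bigr)_{1\le j,k\le n}=\sum_{J,K}(-1)^{\sum J+\sum K}D_s^{J,K}B^{\bar J,\bar K}.
\]
Matching the two double sums reduces the theorem to the sign identity $\oesgn{M}\,\oesgn{N}=(-1)^{\sum M_1+\sum M_2}$.

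This is where Lemma~\ref{Vorzeichenlemma} enters. It states $\oesgn{M}\,\oesgn{N}=(-1)^{\sum N_1+\sum N_2}$, and since $\sum N_i=\binom{n+1}{2}-\sum M_i$ for $i=1,2$, the two exponents differ by $n(n+1)$, which is even. So the signs agree and the first formula is proved. The formula involving $c'_{j,k}(J)$ follows by the same Laplace expansion applied along the columns instead of the rows (equivalently, by transposing and reindexing, swapping the roles of $M_1$ and $M_2$).

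For the odd case $d=2n-1$, so $r=2n-1-2s$ is odd, $2t=r+1$, and $\widetilde N=N\cup\{2n\}$, which forces $n\in\widetilde N_1$ (equivalently $n\notin M_1$). Thus in the Laplace expansion the $n$-th row of the $n\times n$ matrix $(\tilde c_{j,k}(J))$ must always lie in the $B$-block; the special row $(1/(2k-1))_{1\le k\le n}$ in the definition of $\tilde c_{j,k}(J)$ exactly encodes the last row of the odd-variant matrix $B$ appearing in Lemma~\ref{Reelllemma}. The rest of the sign and Laplace bookkeeping goes through verbatim. The only subtle point worth checking carefully is that the sign conventions of Lemma~\ref{Vorzeichenlemma} (which was stated for the even-dimensional ambient set $\{1,\dots,2\nu\}$) transfer correctly to the augmented set $\widetilde N$; this parity check is where I expect the most bookkeeping, but no new idea is needed beyond what is already in Lemma~\ref{Vorzeichenlemma}.
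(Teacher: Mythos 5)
Your proposal is correct and follows essentially the same route as the paper: substitute Lemma~\ref{Reelllemma} into Lemma~\ref{lem:real}, use Lemma~\ref{Vorzeichenlemma} to convert $\oesgn{M}\oesgn{N}$ into the Laplace-expansion sign (via $\oesgn{N}=\oesgn{\widetilde N}$ in the odd case, where $2n$ is the maximal element so appending it does not change the sign), and recognize the resulting double sum as the simultaneous expansion of $\det(c_{j,k}(J))$, $\det(c'_{j,k}(J))$, or $\det(\tilde c_{j,k}(J))$ along the rows (resp.\ columns) indexed by $J$. Your explicit sign check $(-1)^{\sum N_1+\sum N_2}=(-1)^{\sum M_1+\sum M_2}$ is exactly the bookkeeping the paper leaves implicit, so no gap remains.
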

\begin{proof}
Using Lemma~\ref{Vorzeichenlemma} we have to deal with the
quantities $\oesgn{M}\oesgn{N}$ in \eqref{real}. Observe that
$\oesgn{N} = \oesgn{\widetilde N}$, since $2n$ is the maximal element.
Thus we may apply Lemma~\ref{Vorzeichenlemma} with $\widetilde N$
instead of $N$ and thus this lemma holds for odd and even
cardinalities of $N$.

In the first and the third sum in the statement of the theorem
expand the determinants simultaneously according to the rows with index in $J.$ The resulting expression is the same as the one we get when we apply Lemma~\ref{Vorzeichenlemma} and Lemma~\ref{Reelllemma} to \eqref{real}. In the second sum of the theorem
we have to expand the determinants simultaneously according to the columns with index in $J$ to finish the proof.
\end{proof}
Setting $s=0$ we get the following expressions for the ``totally
real'' volumes.
\begin{corollary}\label{totalrealvol}
The volumes $ v_{r}^{(0)}$ of the coefficient space of real
polynomials of degree $r$ with all roots real and in the open
unit disk fulfill
\begin{equation*}
\begin{split}
\frac {v_{2n}^{(0)}}{2^{2n}}&=\det \left(\frac
1{(2k-1)(2j+2k-1)}\right)_{1\le j,k\le n}
\qquad\qquad\qquad\text{ if $r=2n$  and}\\[3mm]
\frac {v_{2n-1}^{(0)}}{2^{2n-1}}&=\det \left(\begin{matrix}\frac 1{(2k-1)(2j+2k-1)},& 1\le j \le n-1\\[3mm]
 \frac 1{2k-1},& j=n \end{matrix}\right)_{1\le j,k\le n}
   \qquad\text{ if $r=2n-1.$}
  \end{split}
\end{equation*}
\end{corollary}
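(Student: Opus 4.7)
The plan is straightforward: this is an immediate specialization of Theorem~\ref{Detmix} to the case $s=0$. In that setting the sums over subsets $J \subseteq \{1,\dots,n\}$ (resp.\ $J\subseteq\{1,\dots,n-1\}$) of cardinality $s$ collapse, because the only subset of cardinality $0$ is the empty set. Thus each sum reduces to a single determinant, and that determinant will be precisely the one claimed.

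More concretely, I would set $s=0$ in the first formula of Theorem~\ref{Detmix} and observe that for $J=\emptyset$ the condition ``$j\in J$'' is never satisfied, so the matrix entry $c_{j,k}(\emptyset)$ equals $\frac{1}{(2k-1)(2j+2k-1)}$ for every pair $(j,k)$ with $1\le j,k\le n$. This yields the asserted formula
\[
\frac {v_{2n}^{(0)}}{2^{2n}}=\det \left(\frac{1}{(2k-1)(2j+2k-1)}\right)_{1\le j,k\le n}.
\]
The same specialization applied to the third formula in Theorem~\ref{Detmix}, where for $J=\emptyset$ the entry $\tilde c_{j,k}(\emptyset)$ equals $\frac{1}{(2k-1)(2j+2k-1)}$ for $1\le j\le n-1$ and $\frac{1}{2k-1}$ for $j=n$, produces the odd-degree formula.

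There is essentially no obstacle: the proof is a one-line invocation of the previous theorem together with the observation $\binom{n}{0}=1$. The only mild thing to verify is that the hypothesis $s\le n$ (resp.\ $s\le n-1$) of Theorem~\ref{Detmix} is trivially met when $s=0$, so the theorem does apply in this degenerate case.
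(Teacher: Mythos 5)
Your proposal is correct and is exactly the paper's argument: Corollary~\ref{totalrealvol} is obtained by setting $s=0$ in Theorem~\ref{Detmix}, so that only $J=\emptyset$ contributes, the entries $c_{j,k}(\emptyset)$ and $\tilde c_{j,k}(\emptyset)$ reduce to the stated ones, and the prefactors $2^{2n+s}$, $2^{2n+s-1}$ become $2^{2n}$, $2^{2n-1}$.
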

Evaluating the determinants by the double alternant
\eqref{doublealternant} or its ``odd'' analogue
\eqref{odddoublealternant}, respectively,  this corollary yields the formul\ae{} for
the totally real volumes known from \cite[Theorem~4.1]{API} (note the close connection to the Selberg integral $S_d(1,1,1/2)$ in this reference).  

\section{The volume ratios in the general case}\label{sec:quo}

Now we are in a position to turn our attention to formul\ae{} for
the quotients $ {v_{d}^{(s)}}/{v_{d}^{(0)}}$ and to confirm Conjecture~\ref{con:integer}. In particular, we
establish the following result involving determinants of partial Hilbert matrices from which we can infer the integrality of $ {v_{d}^{(s)}}/{v_{d}^{(0)}}$ immediately.

\begin{theorem}\label{mixedratio}\label{Integralratios}
The quotients of the volumes \eqref{eq:quotvol} fulfill, for $s\le
\lfloor d/2\rfloor,$
\begin{equation}\label{eq:mixed1}
\begin{split}
\frac {v_{d}^{(s)}}{v_{d}^{(0)}}=\sum_{K\subseteq\{1,\dots,\lfloor d/2\rfloor\}}\binom
{\lfloor d/2\rfloor-|K|}{s-|K|}(-1)^{s+|K|}&\left(\prod_{k\in
K}\frac12\binom{d+2k}{d-2k,\ 2k,\
2k}\right)\prod_{\begin{subarray}{c} j<k\\ j,k\in K\end{subarray}}\frac {(k-j)^2}{(k+j)^2}\\
=\sum_{K\subseteq \{1,\dots,\lfloor d/2\rfloor\}}\binom {\lfloor d/2\rfloor-|K|}{s-|K|}(-1)^{s+|K|}&\left(\prod_{k\in
K}\binom{d+2k}{4k}\binom{4k-1}{2k-1}2k\right)\det \left(\frac
1{j+k}\right)_{j,k\in K}\\
=\sum_{K\subseteq \{1,\dots,\lfloor d/2\rfloor\}}\binom {\lfloor
d/2\rfloor-|K|}{s-|K|}(-1)^{s+|K|}&
\det\Bigg(   
\binom{d+2j}{2j+2k}
\binom{2j+2k-1}{2j-1}
\Bigg)_{j,k\in K}.
\end{split}
\end{equation}
Note that $\binom{n}{k}=0$ for $k<0$ by convention. Moreover, for $K=\emptyset$ the determinant has to be assigned the value $1$. 

In particular, $\frac {v_{d}^{(s)}}{v_{d}^{(0)}}$ is an integer.
\end{theorem}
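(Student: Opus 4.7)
The plan is to reduce the theorem, via the matrix-determinant lemma, to an identity between elementary symmetric functions of two explicit matrices, and then unfold this identity into the three equivalent closed forms. I describe the even case $d = 2n$; the odd case $d = 2n-1$ is analogous, using \eqref{odddoublealternant} in place of \eqref{doublealternant}. First, set $A := (c'_{j,k})_{1\le j,k\le n}$ and $C'' := (c''_{j,k})_{1\le j,k\le n}$ with the notation of Theorem~\ref{Detmix}. The matrix $(c_{j,k}(J))$ differs from $A$ only in the rows indexed by $J$, \emph{i.e.} it is a rank-$|J|$ update of $A$, and the matrix-determinant lemma gives $\det(c_{j,k}(J)) = \det(A)\det(B_{J,J})$ with $B := C''A^{-1}$. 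Combined with $v_{2n}^{(0)}/2^{2n} = \det(A)$ from Corollary~\ref{totalrealvol} and Theorem~\ref{Detmix}, this yields the clean intermediate identity
\[
\frac{v_{2n}^{(s)}}{v_{2n}^{(0)}} = 2^s \sum_{|J|=s}\det(B_{J,J}) = 2^s e_s(B),
\]
where $e_s$ denotes the $s$-th coefficient of the characteristic polynomial.

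The crucial step is now to identify $2^s e_s(B)$ with $e_s(M - I)$, where $M := \bigl(\binom{d+2j}{2j+2k}\binom{2j+2k-1}{2j-1}\bigr)_{j,k}$ is the integer matrix appearing in the third form. Equivalently, $M - I$ and $2B$ must have the same characteristic polynomial. I would establish this by exhibiting a diagonal $\Pi = \operatorname{diag}(\pi_1, \ldots, \pi_n)$ (a natural candidate, suggested by small-case computations, is $\pi_j = (-1)^j \cdot 2j \cdot \binom{d+2j-1}{2j-1}$) satisfying $\Pi^{-1}(M-I)\Pi = 2B$, equivalently the matrix identity
\[
(M - I)\,\Pi A = 2\,\Pi C''.
\]
Since diagonal similarity preserves \emph{all} principal minors, this gives $\det((M-I)_{K,K}) = 2^{|K|}\det(B_{K,K})$ for every $K$, hence $e_s(M-I) = 2^s e_s(B)$. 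The verification of the matrix identity is entrywise; after multiplying out, it reduces to evaluating finite sums of the shape $\sum_l (M_{j,l} - \delta_{j,l})\pi_l\,c'_{l,k}$ in closed form, for which the partial-fraction identity
\[
\frac{1}{(2j + 2l)(2l + 2k - 1)} = \frac{1}{2k - 2j - 1}\left(\frac{1}{2j + 2l} - \frac{1}{2l + 2k - 1}\right)
\]
together with standard binomial summations do the job. This is the technical heart of the argument.

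Once $v_{2n}^{(s)}/v_{2n}^{(0)} = e_s(M-I)$ is in place, the third form of the theorem drops out of the characteristic-polynomial identity $\chi_{M-I}(\lambda) = \chi_M(\lambda + 1)$: expanding this relation together with $e_{s'}(M) = \sum_{|K|=s'}\det(M_{K,K})$ yields
\[
e_s(M-I) = \sum_{K \subseteq \{1, \ldots, n\}}(-1)^{s + |K|}\binom{n - |K|}{s - |K|}\det(M_{K,K}).
\]
The equivalence with the second form is then bookkeeping: extracting from the $(j,k)$-entry of $M_{K,K}$ the row factor $(d+2j)!/((2j-1)!(2j)!(d-2j)!)$ and the column factor $1/(2(d-2k)!(2k)!)$ leaves behind the partial Hilbert minor $(1/(j+k))_{j,k \in K}$, with the constants reassembling to $\prod_{k\in K}\binom{d+2k}{4k}\binom{4k-1}{2k-1}\cdot 2k$. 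Applying the Cauchy double alternant \eqref{doublealternant} to $\det(1/(j+k))_{j,k\in K}$ and collecting terms then produces the first form. Finally, the third form displays $v_d^{(s)}/v_d^{(0)}$ as a $\mathbb{Z}$-linear combination of determinants of integer matrices, so integrality (and hence Conjecture~\ref{con:integer}) is immediate. The main obstacle is thus the diagonal-similarity step in the second paragraph: it is where the peculiar binomial coefficients of $M$ must fit together with the Cauchy-type structure of $A$ and $C''$; once that is settled, the remainder is standard symmetric-function and binomial-identity manipulation.
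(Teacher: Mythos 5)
Your reduction via the matrix--determinant lemma is correct and attractive: replacing the rows of $A=\big(\tfrac{1}{(2k-1)(2j+2k-1)}\big)$ indexed by $J$ with the corresponding rows of $C''=\big(\tfrac{1}{(2j)^2-(2k-1)^2}\big)$ is a rank-$|J|$ update, so $\det\big(c_{j,k}(J)\big)=\det(A)\det\big((C''A^{-1})_{J,J}\big)$, and together with Theorem~\ref{Detmix} and Corollary~\ref{totalrealvol} this gives $v_{2n}^{(s)}/v_{2n}^{(0)}=2^s e_s(B)$ with $B=C''A^{-1}$; likewise your expansion $e_s(M-I)=\sum_K(-1)^{s+|K|}\binom{n-|K|}{s-|K|}\det(M_{K,K})$ and the passage between the three displayed forms (factor extraction plus the Cauchy double alternant) are sound. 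The gap is the step you yourself call the technical heart: the diagonal similarity $\Pi^{-1}(M-I)\Pi=2B$, equivalently $(M-I)\Pi A=2\Pi C''$, is only conjectured from small cases and never proved. Note that you only need equality of the characteristic polynomials of $2B$ and $M-I$; diagonal similarity is strictly stronger, and the proposal establishes neither. After simplifying the $(j,k)$ entry of your matrix identity with the partial fraction $\frac{2(2k-1)}{(2j)^2-(2k-1)^2}=\frac{1}{2j-2k+1}-\frac{1}{2j+2k-1}$, it becomes
\[
\sum_{l=1}^{n}\binom{2n+2j}{2j+2l}\binom{2j+2l-1}{2j-1}\,\pi_l\,\frac{1}{2l+2k-1}\;=\;\frac{\pi_j}{2j-2k+1}\qquad(1\le j,k\le n),
\]
and this is \emph{not} an identity of rational functions in $k$: the two sides have different pole sets, and with your candidate $\pi_l=(-1)^l\,2l\binom{2n+2l-1}{2l-1}$ it already fails for $k=n+1$ when $n=2$, $d=4$ (although it does hold for $k=1,2$, so the candidate passes the small check). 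Thus it holds only at the $n$ relevant points, which makes it a genuinely nontrivial finite identity that cannot be waved through as ``standard binomial summations''; proving it for all $n$ and $j$ (and its analogue with the modified last row when $d=2n-1$) is essentially the whole content of the theorem, so as written the argument is incomplete at its decisive step.

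For contrast, the paper never needs such an identity: it applies the partial fraction decomposition \eqref{partialfrac} to the rows indexed by $J$ and uses multilinearity, so every determinant that arises ($d_K$, resp.\ $e_K$) is of Cauchy type and is evaluated in closed form by \eqref{doublealternant}, resp.\ \eqref{odddoublealternant}; the quotients $d_K/d_{\emptyset}$ then reassemble directly into the first displayed formula, and \eqref{eq:neueumformung} yields the other two, with integrality read off from the third. If you want to pursue your route, the missing identity would need an actual proof (for instance a Zeilberger-type certificate or a residue argument tailored to the finite range of $k$); with that in hand, your argument would indeed constitute a genuinely different and structurally appealing proof of the theorem.
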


\begin{proof}
We start with the instance of even degree $d=2n$. Observe that,
setting $X_j=2j$ and $Y_k=2k-1,$ the entries of the matrices
$\big({c}_{j,k}(J)\big)_{1\le j,k\le n}$ from Theorem \ref{Detmix} are of
the form
\begin{equation*}
\begin{split}
&{c}_{j,k}(J) = \begin{cases} \displaystyle\frac 1{X^2_j-Y^2_k} &\text{ for } j\in J, \\[3mm]
 \displaystyle\frac 1{Y_k(X_j+Y_k)} &\text{ for } j\notin J. \end{cases}
\end{split}
\end{equation*}
Using the partial fraction decomposition
\begin{equation}\label{partialfrac}
\begin{split}
\frac 1{X^2_j-Y^2_k} = -\frac{1}{2Y_k}\left(\frac1{X_j+Y_k} +
\frac1{-X_j+Y_k}\right)
\end{split}
\end{equation}
and the multilinearity of the determinant function we
find, since $|J|=s$,
\begin{equation}\label{dK}
\begin{split}
\Delta_J&:= \det \big({c}_{j,k}(J)\big)_{1\le j,k\le n} = \frac
{(-1)^s}{2^s\prod\limits_{1\le k\le n} (2k-1)} \sum_{K\subseteq J}d_K,
\qquad\text{where}\\
d_K&:=\det \big(d_{j,k}(K)\big)_{1\le j,k\le n} \text{ with } d_{j,k}(K) =
\begin{cases} \displaystyle\frac1{-2j+2k-1} &\text{ for } j\in K,\\[3mm]
\displaystyle\frac1{2j+2k-1} &\text{ for } j\notin K. \end{cases}
\end{split}
\end{equation}
Inserting the last result in Theorem~\ref{Detmix} yields
\begin{equation*}
\frac {v_{2n}^{(s)}}{2^{2n+s}}=\sum_{\begin{subarray}{c}J\subseteq \{1,\dots,n\}\\ |J|=s\end{subarray}
}\Delta_J = \frac {(-1)^s}{2^s\prod\limits_{1\le k\le n} (2k-1)}
\sum_{K\subseteq \{1,\dots,n\}}\binom{n-|K|}{s-|K|}d_K.
\end{equation*}
The binomial coefficient occurs due to the following observation. If
$\mathcal{J}_s$ is the collection of all  subsets of cardinality
$s$ of the set $\{1,\ldots, n\}$ then a given set
$K\subseteq \{1,\ldots, n\}$ is a set of exactly
$\binom{n-|K|}{s-|K|}$ elements of the collection $\mathcal{J}_s$.
For the case of $s=0$ nonreal roots this specializes to
\begin{equation*}
\frac {v_{2n}^{(0)}}{2^{2n}}=\frac 1{\prod\limits_{1\le k\le n}
(2k-1)}d_{\emptyset},
\end{equation*}
so that finally
\begin{equation}\label{quotienteven}
\begin{split}
\frac {v_{2n}^{(s)}}{v_{2n}^{(0)}}=(-1)^s\sum_{K\subseteq
\{1,\dots,n\}}\binom{n-|K|}{s-|K|}\frac {d_K}{d_{\emptyset}}
\quad\text{ with $d_K$ from } \eqref{dK} .
\end{split}
\end{equation}
The determinants $d_K$ may be evaluated by the double alternant
formula \eqref{doublealternant} as
\begin{equation*}
d_K=\frac{\prod\limits_{\begin{subarray}{c}j<k \\ j,k\in
K\end{subarray}}2(j-k)\prod\limits_{\begin{subarray}{c}j<k\\
j,k\notin
K\end{subarray}}2(k-j)\prod\limits_{\begin{subarray}{c}j<k\\ j\in K,
k\notin
K\end{subarray}}2(k+j)\prod\limits_{\begin{subarray}{c}j<k\\
j\notin K, k\in
K\end{subarray}}2(-k-j)\prod\limits_{j<k}2(k-j)}{\prod\limits_{ j\in
K, \text{ all }k}(-2j+2k-1)\prod\limits_{ j\notin K, \text{ all
}k}(2j+2k-1)},
\end{equation*}
where all products run over the range $1\le j,k\le n$ with the denoted
restrictions. Using the notation 
$j\equiv k\ (K) \text{ iff } (j,k\in K \text{ or } j,k\notin K),$
this reads
\begin{equation*}
\begin{split}
d_K=&2^{2\binom{n}2}(-1)^{|\{j<k:k\in K, \text{ all
}j\}|}\frac{\prod\limits_{\begin{subarray}{c}j<k\\ j\equiv k (K)\end{subarray}}(k-j){\prod\limits_{\begin{subarray}{c}j<k
\\ j\not\equiv k (K)\end{subarray}}(k+j) }\prod\limits_{j<k}(k-j)}{\prod\limits_{
j\in K, \text{ all }k}(-2j+2k-1)\prod\limits_{ j\notin K, \text{ all
}k}(2j+2k-1)},
\end{split}
\end{equation*}
and, in particular,
\begin{equation*}
\begin{split}
d_{\emptyset}=2^{2\binom{n}2}\frac{\prod\limits_{j<k}(k-j)^2}{\prod\limits_{j,k}(2j+2k-1)},
\end{split}
\end{equation*}
so that the quotients fulfill
\begin{equation}\label{dkd0}
\begin{split}
\frac {d_K}{d_{\emptyset}}= (-1)^{|\{j<k:k\in K, \text{ all }j\}|}
\prod_{\begin{subarray}{c}j<k\\ j\not\equiv k (K)\end{subarray}}\frac{k+j}{k-j}\prod_{ j\in K, \text{
all }k}\frac{2j+2k-1}{-2j+2k-1}.
\end{split}
\end{equation}
For the first product in \eqref{dkd0} we derive
\begin{equation}\label{eq:firstdkd0}
\prod_{\begin{subarray}{c}j<k
\\ j\not\equiv k (K)\end{subarray}}\frac{k+j}{k-j}
=(-1)^{|\{k<j:j\notin K, k\in K\}|}\prod_{k\in
K}\frac{(n+k)!}{k!2k} \frac{(-1)^{n-k}}{(k-1)!(n-k)!}\prod_{\begin{subarray}{c}j\neq k \\ j,k\in
K\end{subarray}}\frac{k-j}{k+j},
\end{equation}
while the second product in \eqref{dkd0} satisfies
\begin{equation}\label{eq:seconddkd0}
\begin{split}
\prod_{ j\in K, \text{ all }k}\frac{2j+2k-1}{-2j+2k-1}=\prod_{ k\in
K, \text{ all }j}\frac{2j+2k-1}{2j-2k-1}=\prod_{k\in
K}\frac{{(2n+2k-1)}!!}{(2k-1)!!^2(2n-2k-1)!!}(-1)^{k}
\end{split}
\end{equation}
with $m!!$ denoting the double factorial.
Therefore, inserting \eqref{eq:firstdkd0} and \eqref{eq:seconddkd0} in \eqref{dkd0} yields
\begin{equation}\label{eq:dkdooo}
\begin{split}
\frac {d_K}{d_{\emptyset}}=&(-1)^{|\{j<k:k\in K, \text{ all
}j\}|+|\{k<j:j\notin K, k\in K\}|}\prod_{k\in K}\frac{(2n+2k)!2^{n-k}2^{2k-1}}{2^{n+k}(2n-2k)!(2k)!^2}(-1)^{n}\prod_{\begin{subarray}{c}j\neq k \\ j,k\in
K\end{subarray}}\frac{k-j}{k+j}\\
=&(-1)^{|\{j<k:k\in K, \text{ all }j\}|+|\{k<j:j\notin K, k\in
K\}|+|\{k<j:j, k\in K\}|}\\ &\cdot\prod_{k\in K}\frac12\binom{2n+2k}{2n-2k,\ 2k,\
2k}(-1)^{n}\prod_{\begin{subarray}{c}j< k \\ j,k\in
K\end{subarray}}\frac{(k-j)^2}{(k+j)^2}.
\end{split}
\end{equation}
Since
\[
\begin{split}
|\{j<k:&k\in K, \text{ all }j\}|+|\{k<j:j\notin K, k\in K\}|+|\{k<j:j, k\in K\}|
\\&
=|\{j\not=k:k\in K, \text{ all }j\}| = |K|(n-1),
\end{split}
\]
equation \eqref{eq:dkdooo} implies that
\begin{equation}\label{eq:enddkdo}
\frac {d_K}{d_{\emptyset}}=(-1)^{|K|}\prod_{k\in
K}\frac12\binom{2n+2k}{2n-2k,\ 2k,\ 2k}\prod_{\begin{subarray}{c}j< k \\ j,k\in
K\end{subarray}}\frac{(k-j)^2}{(k+j)^2}.
\end{equation}
Inserting \eqref{eq:enddkdo} in \eqref{quotienteven} completes the
proof of the first formula for the case of even degree in
Theorem~\ref{mixedratio}. The second formula follows by  writing the
trinomial coefficient as a product of two binomial coefficients
(mind the factor $1/2$ in the product in \eqref{eq:enddkdo}) and
applying the double alternant formula \eqref{doublealternant} to
express the last product by the determinant of the partial Hilbert
matrix from the statement of the theorem.

Let us now turn to the case of odd degree $d=2n-1$. This time we use
the matrices $\big(\tilde c_{j,k}(J)\big)$ from Theorem~\ref{Detmix}. Again
using  the decomposition \eqref{partialfrac} with $X_j=2j$ and
$Y_k=2k-1$
 we get
\begin{equation*}
\begin{split}
\frac {v_{2n-1}^{(s)}}{2^{2n+s-1}}&=\frac
{(-1)^s}{2^s\prod\limits_{1\le k\le n} (2k-1)} \sum_{K\subseteq \{1,\dots,n-1\}}\binom{n-1-|K|}{s-|K|}e_K,\\
\frac {v_{2n-1}^{(0)}}{2^{2n-1}}&=\prod_{1\le k\le n}\frac
1{2k-1}e_{\emptyset},
\end{split}
\end{equation*}
where
\begin{equation}\label{eK}
\begin{split}
e_K= \det(e_{j,k}(K)) \text{ with }
&e_{j,k}(K) = \begin{cases} \displaystyle \frac {1}{-2j+2k-1} &\text{ for } j\in J, \\[3mm]
\displaystyle \frac 1{2j+2k-1} &\text{ for } j\notin J,\ j\neq n,\\[3mm]
\displaystyle 1 &\text{ for } j=n.
  \end{cases}
\end{split}
\end{equation}
Therefore,
\begin{equation}\label{quotientodd}
\begin{split}
\frac {v_{2n-1}^{(s)}}{v_{2n-1}^{(0)}}=(-1)^{s}\sum_{K\subseteq
\{1,\dots,n-1\}}\binom{n-1-|K|}{s-|K|}\frac
{e_K}{e_{\emptyset}} \text{ with $e_K$ from } \eqref{eK} .
\end{split}
\end{equation}
Evaluating the determinants with the variant \eqref{odddoublealternant} of the Cauchy double alternant formula yields
\begin{equation*}
\begin{split}
e_K&=\frac{\prod\limits_{\begin{subarray}{c}j<k \\ j,k\in K
\end{subarray}}2(j-k)\prod\limits_{\begin{subarray}{c}1\le j<k\le n-1 \\
j,k\notin K \end{subarray}}2(k-j)
\prod\limits_{\begin{subarray}{c}1\le j<k \le n-1\\ j\in K, k\notin
K
\end{subarray}}2(k+j)\prod\limits_{\begin{subarray}{c}j<k \\
j\notin K, k\in K \end{subarray}}2(-j-k)\prod\limits_{1\le j<k\le
n}2(k-j)}{\prod\limits_{ j\in K, 1\le k\le
n}(-2j+2k-1)\prod\limits_{\begin{subarray}{c} 1\le j\le n-1, 1\le k\le
n \\ j\notin K \end{subarray}}(2j+2k-1)}\\
&=
2^{\binom{n-1}2+\binom{n}2}(-1)^{|\{j<k: k\in K,\text{ all
}j\}|}\frac{\prod\limits_{\begin{subarray}{c} 1\le j<k\le n-1\\ j\equiv k 
(K)\end{subarray}}(k-j)\prod\limits_{\begin{subarray}{c}1\le j<k\le n-1\\ j\not\equiv k (K)\end{subarray}}(k+j)\prod\limits_{1\le j<k\le n}(k-j)}{\prod\limits_{
j\in K, 1\le k\le n}(-2j+2k-1)\prod\limits_{ j\notin K, 1\le k\le n}(2j+2k-1)},
\end{split}
\end{equation*}
and, in particular,
\begin{equation*}
\begin{split}
e_{\emptyset}=2^{\binom{n-1}2+\binom{n}2}\frac{\prod\limits_{1\le
j<k\le n}(k-j)\prod\limits_{1\le j<k\le
n-1}(k-j)}{\prod\limits_{\begin{subarray}{c}1\le j\le n-1\\ 1\le k\le n \end{subarray}}(2j+2k-1)}.
\end{split}
\end{equation*}
Therefore, the quotients fulfill
\begin{equation}\label{eKe0}
\begin{split}
\frac {e_K}{e_{\emptyset}}= (-1)^{|\{j<k: k\in K,\text{ all }j\}|}
\prod_{\begin{subarray}{c} 1\le j < k\le
n-1 \\ j\not\equiv k (K) \end{subarray}}\frac{k+j}{k-j}\prod_{ j\in K, 1\le k\le
n}\frac{2j+2k-1}{-2j+2k-1}.
\end{split}
\end{equation}
For the the first product in \eqref{eKe0} we get from
\eqref{eq:firstdkd0} by replacing $n$ by $n-1$
\begin{equation*}
\begin{split}
\prod_{\begin{subarray}{c} 1\le j < k\le
n-1 \\ j\not\equiv k (K) \end{subarray}}\frac{k+j}{k-j}=(-1)^{|\{1\le k<j\le n-1:j\notin K, k\in
K\}|}\prod_{k\in K}\frac{(n-1+k)!}{k!(2k)}
\frac{(-1)^{n-1-k}}{(k-1)!(n-1-k)!}\prod_{\begin{subarray}{c}j\neq
k \\ j,k\in K \end{subarray}}\frac{k-j}{k+j},
\end{split}
\end{equation*}
whereas the second product in \eqref{eKe0} equals the result of
\eqref {eq:seconddkd0}.
Therefore, we get
\begin{equation}\label{eq:eeee}
\frac {e_K}{e_{\emptyset}}
=(-1)^{|K|} \prod_{k\in K}\frac12\binom{2n-1+2k}{2n-1-2k,\ 2k,\
2k}\prod_{\begin{subarray}{c} j<k \\ j,k\in K \end{subarray}}\frac{(k-j)^2}{(k+j)^2}.
\end{equation}
Inserting \eqref{eq:eeee} into \eqref{quotientodd} completes the
proof of the first formula for the case $d=2n-1$ in Theorem
\ref{mixedratio}. The second formula follows again by applying the
double alternant formula \eqref{doublealternant} to express the last
product by the determinant of a partial Hilbert matrix. 

The third formula in the theorem follows immediately from 
\begin{align}\label{eq:neueumformung}
\begin{aligned}
\Bigg(\prod_{k\in K}\binom{d+2k}{4k} & \binom{4k-1}{2k-1}2k\Bigg)   \det \left(\frac 1{j+k}\right)_{j,k\in K}
\\
&=\left(\prod_{j\in K}\frac{(d+2j)!}{2(2j-1)!}\prod_{k\in K}\frac{1}{(d-2k)!(2k)!} \right) \det \left(\frac
1{j+k}\right)_{j,k\in K} 
\\
&= \det \left(\frac
{(d+2j)!}{(2j+2k)(d-2k)!(2j-1)!(2k)!}\right)_{j,k\in K} 
\\
&= \det \left( \binom{d+2j}{2j+2k} \binom{2j+2k-1}{2j-1}
 \right)_{j,k\in K}. \qedhere
\end{aligned}
\end{align}
\end{proof}

\begin{remark}\label{eq:mixed2}
Using the second sum for ${v_{2n}^{(s)}}/{2^{2n+s}}$   in Theorem
\ref{Detmix} yields the following alternative formula for the volume
ratios in the case $d=2n$ for all $s\le n.$
\begin{equation*}
\begin{split}
\frac
{v_{2n}^{(s)}}{v_{2n}^{(0)}}&=\sum_{K\subseteq
\{1,\dots,n\}}\binom
{n-|K|}{s-|K|}(-1)^{s-|K|}
\prod_{k\in
K}\frac12\binom{2n+2k-1}{2n-2k+1,\ 2k-1,\
2k-1}\prod_{\begin{subarray}{c} j<k\\ j,k\in K\end{subarray}}\frac {(k-j)^2}{(k+j-1)^2}\\
&\hskip -0.7cm =\sum_{K\subseteq \{1,\dots,n\}}\binom {n-|K|}{s-|K|}(-1)^{s-|K|}
\left(\prod_{k\in
K}\binom{2n+2k-1}{4k-2}\binom{4k-3}{2k-1}(2k-1)\right)\det
\left(\frac 1{j+k-1}\right)_{j,k\in K}
\\
&\hskip -0.7cm =\sum_{K\subseteq \{1,\dots,n\} }\binom
{n-|K|}{s-|K|}(-1)^{s+|K|}\det\Bigg(
\binom{2n+2j-1}{2j+2k-2}\binom{2j+2k-3}{2j-2}
\Bigg)_{j,k\in K}.
\end{split}
\end{equation*}
\end{remark}
We end this section by showing that some results of Kirschenhofer
and Weitzer~\cite{KiWei} as well as Akiyama and Peth\H{o}~\cite{API} are simple consequences of
Theorem \ref{mixedratio}. The first result refers to the instance of
polynomials with exactly one pair of nonreal conjugate zeroes.

\begin{corollary}[{\it cf.}~\cite{KiWei}]
For $d\in\mathbb{N}$ we have
\[
\frac{v_d^{(1)}}{v_d^{(0)}} = \frac{P_d(3)-2d-1}{4},
\]
where $P_d$ denotes the $d$-th Legendre polynomial.
\end{corollary}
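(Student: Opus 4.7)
The strategy is to specialize Theorem~\ref{mixedratio} to $s=1$ and identify the resulting sum with Legendre's generating identity at the values $x=3$ and $x=-1$.

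First I would apply the first form of Theorem~\ref{mixedratio} with $s=1$. The binomial coefficient $\binom{\lfloor d/2\rfloor-|K|}{1-|K|}$ vanishes unless $|K|\in\{0,1\}$. The empty set contributes $\binom{\lfloor d/2\rfloor}{1}(-1)^{1}=-\lfloor d/2\rfloor$ (the empty determinant equals $1$ by convention) and each singleton $K=\{k\}$ with $1\le k\le \lfloor d/2\rfloor$ contributes $\binom{\lfloor d/2\rfloor-1}{0}(-1)^{2}\cdot\tfrac12\binom{d+2k}{d-2k,\,2k,\,2k}$. Hence
\[
\frac{v_d^{(1)}}{v_d^{(0)}} = -\left\lfloor\frac{d}{2}\right\rfloor + \frac{1}{2}\sum_{k=1}^{\lfloor d/2\rfloor}\binom{d+2k}{d-2k,\,2k,\,2k}.
\]

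Next I would rewrite the trinomial coefficients in a form amenable to the classical Legendre expansion. Using $\binom{d+2k}{d-2k,\,2k,\,2k}=\binom{d}{2k}\binom{d+2k}{2k}$ and recalling that
\[
P_d(x) = \sum_{k=0}^{d}\binom{d}{k}\binom{d+k}{k}\left(\frac{x-1}{2}\right)^k,
\]
one sees that $P_d(3)=S_e+S_o$, where $S_e$ and $S_o$ denote the even-index and odd-index partial sums of $\sum_{k=0}^{d}\binom{d}{k}\binom{d+k}{k}$, respectively. The even sum matches (up to the $k=0$ term) exactly the quantity appearing above:
\[
\sum_{k=1}^{\lfloor d/2\rfloor}\binom{d+2k}{d-2k,\,2k,\,2k} = S_e - 1.
\]

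To determine $S_e$ I would exploit the standard evaluation $P_d(-1)=(-1)^d$, which via the same Legendre expansion gives $S_e-S_o=(-1)^d$. Combining with $P_d(3)=S_e+S_o$ yields $S_e=\tfrac12\bigl(P_d(3)+(-1)^d\bigr)$. Plugging back I obtain
\[
\frac{v_d^{(1)}}{v_d^{(0)}} = -\left\lfloor\frac{d}{2}\right\rfloor + \frac{P_d(3)+(-1)^d-2}{4}.
\]
A short case distinction in the parity of $d$ (noting $-\lfloor d/2\rfloor = -(d-[d\text{ odd}])/2$ and $(-1)^d=1-2[d\text{ odd}]$) collapses the right-hand side to $(P_d(3)-2d-1)/4$ in both cases, completing the proof.

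There is no serious obstacle here: the entire argument is a specialization followed by bookkeeping. The only slightly subtle point is the appeal to $P_d(-1)=(-1)^d$ to separate the even and odd parts of $\sum\binom{d}{k}\binom{d+k}{k}$; without this identity one would be left with an unilluminating sum. Everything else is routine algebra.
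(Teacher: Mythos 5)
Your proposal is correct and follows essentially the same route as the paper: specialize the first formula of Theorem~\ref{mixedratio} to $s=1$ (only $|K|\le 1$ survives), then use the expansion $P_d(x)=\sum_k\binom{d}{k}\binom{d+k}{k}\bigl(\tfrac{x-1}{2}\bigr)^k$ at $x=3$ together with $P_d(-1)=(-1)^d$ to isolate the even-index terms, and finish with a parity check. The only cosmetic difference is your use of $\binom{d}{2k}\binom{d+2k}{2k}$ in place of the paper's equivalent $\binom{d+2k}{4k}\binom{4k-1}{2k-1}$.
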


\begin{proof}
From \eqref{eq:mixed1} with $s=1$ we have
\begin{equation}\label{even:d1}
\begin{split}
\frac {v_{d}^{(1)}}{v_{d}^{(0)}}=-\lfloor
d/2\rfloor+\sum_{k=1}^{\lfloor
d/2\rfloor}\binom{d+2k}{4k}\binom{4k-1}{2k-1}.
\end{split}
\end{equation}
On the other hand the Legendre polynomials may be defined as
(\cite[p.~66]{Riordan})
\begin{equation}
\begin{split}
P_d(x)=\sum_{j=0}^{d}\binom{d+j}{2j}\binom{2j}{j}\left(\frac{x-1}2\right)^j,
\end{split}
\end{equation}
so that
\begin{equation}\label{P3}
\begin{split}
P_{d}(3)=1+\sum_{j=1}^{d}\binom{d+j}{2j}\binom{2j}{j}.
\end{split}
\end{equation}
Furthermore, since (see {\it e.g.}\ \cite[p.~158]{Rainville})
$P_{d}(-x)=(-1)^d P_d(x),$ we have
\begin{equation*}
\begin{split}
P_{d}(-1)=1+\sum_{j=1}^{d}\binom{d+j}{2j}\binom{2j}{j}(-1)^j=(-1)^d
P_{d}(1)=(-1)^d,
\end{split}
\end{equation*}
yielding
\begin{equation*}
\begin{split}
\sum_{\begin{subarray}{c} j=1 \\[0.5mm] j\text{
odd}\end{subarray}}^d\binom{d+j}{2j}\binom{2j}{j}=1-(-1)^d+\sum_{\begin{subarray}{c}j=1 \\[0.5mm]
j\text{ even} \end{subarray}}^d\binom{d+j}{2j}\binom{2j}{j}.
\end{split}
\end{equation*}
Therefore, setting $j=2k$ for even $j,$ \eqref{P3} can be rewritten as
\begin{equation}
\begin{split}
P_{d}(3)- 2+(-1)^d = 4\sum_{k=1}^{\lfloor d/2\rfloor}\binom{d+2k}{4k}\binom{4k-1}{2k-1},
\end{split}
\end{equation}
so that finally
\begin{equation}
\begin{split}
\frac{P_{d}(3)- 2d -1}4 = -\lfloor d/2\rfloor+\sum_{k=1}^{\lfloor
d/2\rfloor}\binom{d+2k}{4k}\binom{4k-1}{2k-1},
\end{split}
\end{equation}
which coincides with \eqref{even:d1} from above.
\end{proof}

As in \eqref{eq:full} let $v_d$ denote the volume of the coefficient
space $\mathcal{E}_d$ of the polynomials of degree $d$ with all
zeroes in the open unit disk. Summing up over all possible
signatures in Theorem \ref{mixedratio} we get the results on the
quotients $v_d/v^{(0)}_d$ contained in \cite[equation~(9)]{API}.

\begin{corollary}[{{\it cf.}~\cite[equation (9)]{API}}]\label{totalfraceven}
For $d\in\N$ we have
\[
\frac{v_{d}}{v^{(0)}_{d}}=
\begin{cases}
\displaystyle \frac1{2^n}\prod_{j=n+1}^{2n}\binom{2j}j\prod_{j=1}^{n-1}\binom{2j}j^{-1} & \hbox{ for }d=2n,\\[5mm]
\displaystyle\frac
1{2^n}\prod_{j=n}^{2n-1}\binom{2j}j\prod_{j=1}^{n-1}\binom{2j}j^{-1}& \hbox{ for }d=2n-1.
\end{cases}
\]
\end{corollary}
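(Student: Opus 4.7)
Since $\mathcal{E}_d$ is the almost-disjoint union $\bigsqcup_s \mathcal{E}_d^{(s)}$, one has $v_d = \sum_{s=0}^{m} v_d^{(s)}$ where $m=\lfloor d/2\rfloor$. My plan is to substitute the first formula of Theorem~\ref{mixedratio} into $v_d/v_d^{(0)} = \sum_{s=0}^m v_d^{(s)}/v_d^{(0)}$ and swap the two summations. For fixed $K$ with $|K|\le m$, the resulting inner sum is
\[
\sum_{s=0}^m \binom{m-|K|}{s-|K|}(-1)^{s+|K|} \;=\; \sum_{t=0}^{m-|K|}\binom{m-|K|}{t}(-1)^t \;=\; (1-1)^{m-|K|},
\]
which vanishes unless $|K|=m$. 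Hence only the term $K=\{1,\dots,m\}$ survives, giving the clean closed form
\[
\frac{v_d}{v_d^{(0)}} \;=\; \prod_{k=1}^{m}\frac{1}{2}\binom{d+2k}{d-2k,\,2k,\,2k}\prod_{1\le j<k\le m}\frac{(k-j)^2}{(k+j)^2}.
\]

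The remainder of the proof is purely computational: I would rewrite this product in the central-binomial form of the corollary, treating $d=2n$ (with $m=n$) and $d=2n-1$ (with $m=n-1$) in parallel. The elementary factorial identities
\[
\prod_{1\le j<k\le m}(k-j)\;=\;\prod_{l=1}^{m-1}l!, \qquad \prod_{1\le j<k\le m}(k+j)\;=\;\prod_{l=1}^{m}\frac{(2l-1)!}{l!},
\]
together with the trivial regrouping $\prod_{k=1}^{n}(2k-1)!\,(2k)! = \prod_{l=1}^{2n}l!$ (obtained by pairing consecutive factorials), convert the expression into a ratio of ordinary factorials. A short bookkeeping then produces $\frac{1}{2^n}\prod_{j=n+1}^{2n}\binom{2j}{j}\prod_{j=1}^{n-1}\binom{2j}{j}^{-1}$ in the even case and the analogous expression $\frac{1}{2^n}\prod_{j=n}^{2n-1}\binom{2j}{j}\prod_{j=1}^{n-1}\binom{2j}{j}^{-1}$ in the odd case.

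The main (and essentially only) obstacle is the combinatorial bookkeeping in this final step; it is mechanical, and a sanity check against small $n$ (say $n\le 3$) eliminates the risk of an off-by-one error or a misplaced factor of $2$. Everything conceptual is already packaged in Theorem~\ref{mixedratio}, and the alternating-binomial collapse that isolates the single surviving $K$ is the standard finite-difference identity $\sum_t \binom{N}{t}(-1)^t = 0^N$.
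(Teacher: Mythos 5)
Your proposal is correct and follows essentially the same route as the paper: the paper likewise writes $v_d=\sum_s v_d^{(s)}$, inserts the first formula of Theorem~\ref{mixedratio}, interchanges the two sums so that the alternating binomial sum $\sum_k\binom{D-|K|}{k}(-1)^k=\delta_{D,|K|}$ leaves only $K=\{1,\dots,\lfloor d/2\rfloor\}$, and then evaluates the surviving product $p_d(1,\dots,\lfloor d/2\rfloor)$ by exactly the kind of factorial bookkeeping you outline (note only that in the odd case $d=2n-1$ the prefactor $2^{-(n-1)}$ from the product picks up an extra $1/2$ during this regrouping to give the stated $2^{-n}$). No gap; the remaining step is the same routine computation the paper also leaves implicit.
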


\begin{proof}
Let
\begin{equation}
\label{pnKneu} p_d(K):=\prod_{k\in
K}\binom{d+2k}{4k}\binom{4k-1}{2k-1}\prod_{\begin{subarray}{c}j<k \\ j,k\in K\end{subarray}}\frac
{(k-j)^2}{(k+j)^2}.
\end{equation}
Then by equation \eqref{eq:mixed1} in Theorem~\ref{mixedratio} we have, setting $D=\lfloor d/2 \rfloor$,
\begin{equation*}
\begin{split}
\frac{v_{d}}{v^{(0)}_{d}}&= \sum_{s=0}^{D}
\frac{v^{(s)}_{d}}{v^{(0)}_{d}}= \sum_{s=0}^{D}\sum_{K\subseteq
\{1,\dots,D\}}(-1)^{s-|K|}\binom{D-|K|}{s-|K|}p_d(K)\\
&=\sum_{K\subseteq \{1,\dots,D\}}p_d(K)\sum_{k=0}^{D-|K|}\binom{D-|K|}{k}(-1)^{k}=\sum_{K\subseteq
\{1,\dots,D\}}p_d(K)\delta_{D,|K|}\\
&=p_d(1,\dots,D)=p_d(1,\dots,\lfloor d/2\rfloor).
\end{split}
\end{equation*}
Using \eqref{pnKneu} this yields
\begin{equation*}
\frac{v_{d}}{v^{(0)}_{d}}
 =\frac 1{2^{\lfloor d/2\rfloor}}
\prod_{k=1}^{\lfloor d/2\rfloor}\frac{(d+2k)!(k-1)!^2 k!^2}{(d-2k)!
(2k)!^2(2k-1)!^2}.
\end{equation*}
For  $d=2n$ we get
\begin{equation*}
\frac{v_{2n}}{v^{(0)}_{2n}}
=
\frac1{2^n}\prod_{j=n+1}^{2n}\binom{2j}j\prod_{j=1}^{n-1}\binom{2j}j^{-1},
\end{equation*}
whereas for $d=2n-1$ we find
\begin{equation*}
\frac{v_{2n-1}}{v^{(0)}_{2n-1}} 
=\frac1{2^n}\prod_{j=n}^{2n-1}\binom{2j}j\prod_{j=1}^{n-1}\binom{2j}j^{-1}
\end{equation*}
which concludes the proof.
\end{proof}

The integrality of the ratios ${v_{d}}/{v_{d}^{(0)}}$  is established in \cite{API} by a
careful analysis of the divisors of the numerators and denominators
of the fractions occurring in Corollary~\ref{totalfraceven}. Of
course, the integrality is now immediate from Theorem~\ref{mixedratio} and, furthermore, we gain the following
alternative expressions for the ratios in consideration.

\begin{corollary}
The ratios ${v_{d}}/{v_{d}^{(0)}}$  of the volumes
 from Corollary \ref{totalfraceven} are integers and are given by the alternative
formula
\begin{equation}\label{totalfinalformulaeven}
\frac {v_{d}}{v_{d}^{(0)}}=\det\Bigg(   
\binom{d+2j}{2j+2k}
\binom{2j+2k-1}{2j-1}\Bigg)_{1\le j,k\le
\lfloor d/2\rfloor}.
\end{equation}
\end{corollary}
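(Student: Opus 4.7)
The plan is to deduce both assertions directly from Theorem~\ref{mixedratio} by summing its third expression for $v_d^{(s)}/v_d^{(0)}$ over all admissible signatures $s$, thereby mimicking the summation trick already used in the proof of Corollary~\ref{totalfraceven}, but this time starting from the determinantal (rather than the product) form of the general formula.

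First I would note that integrality comes for free: since $v_d=\sum_{s=0}^{\lfloor d/2\rfloor} v_d^{(s)}$, we have $v_d/v_d^{(0)}=\sum_{s=0}^{\lfloor d/2\rfloor} v_d^{(s)}/v_d^{(0)}$, and by Theorem~\ref{mixedratio} each summand is an integer, hence so is the total. No additional number-theoretic work on the fractions in Corollary~\ref{totalfraceven} is required.

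For the determinantal identity, write $D=\lfloor d/2\rfloor$ and apply the third formula of Theorem~\ref{mixedratio}:
\[
\frac{v_d^{(s)}}{v_d^{(0)}}=\sum_{K\subseteq\{1,\dots,D\}}\binom{D-|K|}{s-|K|}(-1)^{s+|K|}\det\!\Bigl(\tbinom{d+2j}{2j+2k}\tbinom{2j+2k-1}{2j-1}\Bigr)_{j,k\in K}.
\]
Interchanging summations, I would then compute
\[
\frac{v_d}{v_d^{(0)}}=\sum_{K\subseteq\{1,\dots,D\}} \det(\cdots)_{j,k\in K}\,\Bigl(\sum_{s=|K|}^{D}\binom{D-|K|}{s-|K|}(-1)^{s+|K|}\Bigr).
\]
The inner sum, via $m=s-|K|$, becomes $\sum_{m=0}^{D-|K|}\binom{D-|K|}{m}(-1)^{m}=\delta_{D,|K|}$, exactly as in Corollary~\ref{totalfraceven}. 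Thus only $K=\{1,\dots,D\}$ survives, yielding formula \eqref{totalfinalformulaeven}.

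There is essentially no main obstacle in this argument; the proof is a short bookkeeping exercise on top of Theorem~\ref{mixedratio}. The only minor point worth verifying carefully is the sign: the exponent $(-1)^{s+|K|}$ in Theorem~\ref{mixedratio} combined with the reindexing $m=s-|K|$ gives $(-1)^{m+2|K|}=(-1)^m$, which is precisely what produces the desired Kronecker delta selecting the full index set $K=\{1,\dots,D\}$. As a cross-check, one could verify consistency with Corollary~\ref{totalfraceven} by evaluating the resulting $D\times D$ determinant via the Cauchy double alternant (undoing the computation \eqref{eq:neueumformung}), which recovers the product formulas for $v_d/v_d^{(0)}$ given there.
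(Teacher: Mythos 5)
Your proof is correct and is essentially the paper's own argument: the paper performs the same alternating binomial sum over $s$ (yielding $\delta_{\lfloor d/2\rfloor,|K|}$) in the proof of Corollary~\ref{totalfraceven} to obtain $p_d(1,\dots,\lfloor d/2\rfloor)$ and then converts this product into the determinant via \eqref{eq:neueumformung}, whereas you sum the third, determinantal expression of Theorem~\ref{mixedratio} directly, which is a purely cosmetic difference since the expressions coincide by \eqref{eq:neueumformung}. Your integrality remark (summing the integer quotients $v_d^{(s)}/v_d^{(0)}$ over all signatures) is likewise exactly the paper's.
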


\begin{proof}
From the proof of Corollary\ref{totalfraceven} we have
\begin{equation*}
\frac{v_{d}}{v^{(0)}_{d}} = p_d(1,2,\dots,\lfloor d/2\rfloor). 
\end{equation*}
But from \eqref{eq:neueumformung} we see that
\[
p_d(1,2,\dots,\lfloor d/2\rfloor)=\det\Bigg(   
\binom{d+2j}{2j+2k}
\binom{2j+2k-1}{2j-1}\Bigg)_{1\le j,k\le
\lfloor d/2\rfloor}. \qedhere
\]
\end{proof}

\section*{Acknowledgements}
We are grateful to the anonymous referees whose comments led to a considerable improvement of 
the exposition of 
this paper. 
In particular, one of the referees suggested to replace our original proof of Corollary~\ref{ConvolutionSdlemma} by introducing $H(X)$ defined in \eqref{eq:newHm} and identity \eqref{eq:nevConv} together with the reference to \eqref{eq:Hclosed} and the idea of using partial fraction techniques.

\bibliographystyle{siam}
\bibliography{ContractPolyR2}

\begin{thebibliography}{10}

\bibitem{Akiyama-Borbeli-Brunotte-Pethoe-Thuswaldner:05}
{\sc S.~Akiyama, T.~Borb{\'e}ly, H.~Brunotte, A.~Peth{\H{o}}, and J.~M.
  Thuswaldner}, {\em Generalized radix representations and dynamical systems.
  {I}}, Acta Math. Hungar., 108 (2005), pp.~207--238.

\bibitem{API}
{\sc S.~Akiyama and A.~Peth{\H{o}}}, {\em On the distribution of polynomials
  with bounded roots, {I}. {P}olynomials with real coefficients}, J. Math. Soc.
  Japan, 66 (2014), pp.~927--949.

\bibitem{APII}
\leavevmode\vrule height 2pt depth -1.6pt width 23pt, {\em On the distribution
  of polynomials with bounded roots {II}. {P}olynomials with integer
  coefficients}, Unif. Distrib. Theory, 9 (2014), pp.~5--19.

\bibitem{AlastueyJankovici:xx}
{\sc A.~Alastuey and B.~Jancovici}, {\em On the classical two-dimensional
  one-component {C}oulomb plasma}, Journal de Physique, 42 (1981), pp.~1--12.

\bibitem{AAR:99}
{\sc G.~E. Andrews, R.~Askey, and R.~Roy}, {\em Special functions}, vol.~71 of
  Encyclopedia of Mathematics and its Applications, Cambridge University Press,
  Cambridge, 1999.

\bibitem{Aomoto:87}
{\sc K.~Aomoto}, {\em Jacobi polynomials associated with {S}elberg integrals},
  SIAM J. Math. Anal., 18 (1987), pp.~545--549.

\bibitem{Cohn:22}
{\sc A.~Cohn}, {\em \"{U}ber die {A}nzahl der {W}urzeln einer algebraischen
  {G}leichung in einem {K}reise}, Math. Z., 14 (1922), pp.~110--148.

\bibitem{Dub:16}
{\sc A.~Dubickas}, {\em Counting integer reducible polynomials with bounded
  measure}, Applicable Analysis and Discrete Mathematics, 10 (2016),
  pp.~308--324.

\bibitem{Fam:89}
{\sc A.~T. Fam}, {\em The volume of the coefficient space stability domain of
  monic polynomials}, Proc. IEEE Int. Symp. Circuits and Systems, 2 (1989),
  pp.~1780--1783.

\bibitem{Fam-Meditch:78}
{\sc A.~T. Fam and J.~S. Meditch}, {\em A canonical parameter space for linear
  systems design}, IEEE Trans. Autom. Control, 23 (1978), pp.~454--458.

\bibitem{FW:08}
{\sc P.~J. Forrester and S.~O. Warnaar}, {\em The importance of the {S}elberg
  integral}, Bull. Amer. Math. Soc. (N.S.), 45 (2008), pp.~489--534.

\bibitem{KLW:15}
{\sc G.~K\'arolyi, A.~Lascoux, and S.~O. Warnaar}, {\em Constant term
  identities and {P}oincar\'e polynomials}, Trans. Amer. Math. Soc., 367
  (2015), pp.~6809--6836.

\bibitem{KT:14}
{\sc P.~Kirschenhofer and J.~M. Thuswaldner}, {\em Shift radix systems---a
  survey}, in Numeration and Substitution 2012, RIMS K\^oky\^uroku Bessatsu,
  B46, Res. Inst. Math. Sci. (RIMS), Kyoto, 2014, pp.~1--59.

\bibitem{KiWei}
{\sc P.~Kirschenhofer and M.~Weitzer}, {\em A number theoretic problem on the
  distribution of polynomials with bounded roots}, Integers, 15, \# A10 (2015).

\bibitem{Krattenthaler2001}
{\sc C.~Krattenthaler}, {\em Advanced determinant calculus}, in The Andrews
  Festschrift: Seventeen Papers on Classical Number Theory and Combinatorics,
  D.~Foata and G.-N. Han, eds., Springer Berlin Heidelberg, Berlin, Heidelberg,
  2001, pp.~349--426.

\bibitem{Macdonald:95}
{\sc I.~G. Macdonald}, {\em Symmetric functions and {H}all polynomials}, Oxford
  Mathematical Monographs, The Clarendon Press, Oxford University Press, New
  York, second~ed., 1995.
\newblock With contributions by A. Zelevinsky, Oxford Science Publications.

\bibitem{RS:02}
{\sc Q.~I. Rahman and G.~Schmeisser}, {\em Analytic theory of polynomials},
  vol.~26 of London Mathematical Society Monographs. New Series, The Clarendon
  Press, Oxford University Press, Oxford, 2002.

\bibitem{Rainville}
{\sc E.~D. Rainville}, {\em Special Functions}, The Macmillan Company, New
  York, 1960.

\bibitem{Riordan}
{\sc J.~Riordan}, {\em Combinatorial identities}, Wiley Series in Probability
  and Mathematical Statistics, John Wiley \& Sons, New York, 1968.

\bibitem{Schur:17}
{\sc J.~Schur}, {\em \"{U}ber {P}otenzreihen, die im {I}nnern des
  {E}inheitskreises beschr\"ankt sind. {I}}, J. Reine Angew. Math., 147 (1917),
  pp.~205--232.

\bibitem{Schur:18}
\leavevmode\vrule height 2pt depth -1.6pt width 23pt, {\em {\"U}ber
  {P}otenzreihen, die im {I}nnern des {E}inheitskreises beschr\"ankt sind.
  {II}}, J. Reine Angew. Math., 148 (1918), pp.~122--145.

\bibitem{Selberg:44}
{\sc A.~Selberg}, {\em Remarks on a multiple integral}, Norsk Mat. Tidsskr., 26
  (1944), pp.~71--78.

\bibitem{Todd:60}
{\sc J.~Todd}, {\em Computational problems concerning the {H}ilbert matrix}, J.
  Res. Nat. Bur. Standards Sect. B, 65B (1961), pp.~19--22.

\end{thebibliography}
\end{document}